\newtheorem{assumptionA}{Assumption}
\theoremstyle{plain}
\newcounter{count}[section]\numberwithin{count}{section}
\newtheorem{theorem}[count]{Theorem}
\newtheorem{proposition}[count]{Proposition}
\newtheorem{lemma}[count]{Lemma}
\newtheorem{remark}[count]{Remark}
\numberwithin{equation}{section}
\newcommand{\E}{\mathbb{E}}
\global\long\def\Esp#1{\mathbb{E}\left[#1\right]}
\global\long\def\V#1{\mathbb{V}\mathrm{ar}\left[#1\right]}
\global\long\def\Cov#1{\mathbb{C}\mathrm{ov}\left[#1\right]}
\global\long\def\ind{\mathbbm{1}}
\begin{document}

\title{Probabilistic and Piecewise Deterministic models in Biology}
\author{Bertrand Cloez$^*$}\address{$^*$MISTEA, INRA, Montpellier SupAgro, Univ. Montpellier, France.}
\author{Renaud Dessalles$^\dag$}\address{$^\dag$INRA, UR MaIAGE, domaine de Vilvert, Jouy en Josas, France.}
\author{Alexandre Genadot$^\ddag$}\address{$^\ddag$Institut de Math\'ematiques de Bordeaux, Inria Bordeaux-Sud-Ouest, France.}
\author{Florent Malrieu$^\S$}\address{$^\S$Laboratoire de Math\'ematiques et Physique Th\'eorique
	(UMR CNRS 7350), F\'ed\'eration Denis Poisson (FR CNRS 2964), Universit\'e
	Fran\c cois-Rabelais, Parc de Grandmont, 37200 Tours, France.}
\author{\\ Aline Marguet$^\bot$}\address{$^\bot$CMAP, \'Ecole Polytechnique, Universit\'e Paris-Saclay, Palaiseau, France.}
\author{Romain Yvinec$^\sharp$}\address{$^\sharp$Physiologie de la Reproduction et des Comportements, Institut 
	National de la Recherche Agronomique (INRA) UMR85, CNRS-Universit\'e Fran\c{c}ois-Rabelais UMR7247, IFCE, Nouzilly, 
	37380 France}
%
%
\begin{abstract} We present recent results on Piecewise Deterministic Markov Processes (PDMPs), involved in biological modeling. PDMPs, first introduced in the probabilistic literature by \cite{Davis1984}, are a very general class of Markov processes and are being increasingly popular in biological applications. They also give new interesting challenges from the theoretical point of view. We give here different examples on the long time behavior of switching Markov models applied to population dynamics, on uniform sampling in general branching models applied to structured population dynamic, on time scale separation in integrate-and-fire models used in neuroscience, and, finally, on moment calculus in stochastic models of gene expression.
	
\end{abstract}
%

%
\maketitle
\section*{Introduction}

The piecewise deterministic Markov processes (denoted PDMPs) were first introduced in
the literature by Davis \cite{Davis1984,Davis1993}. The key point of his work was to endow 
the PDMP with rather general tools, similar as such that already exist for diffusion processes. 
Indeed, PDMPs form a family of non-diffusive c\`adl\`ag Markov processes, involving a 
deterministic motion punctuated by random jumps. The motion of the PDMP 
$\{X(t)\}_{t\geq 0}$ depends on three local characteristics, namely the jump rate $\lambda$, 
the deterministic flow $\varphi$ and the transition measure $Q$ according to which the 
location of the process at the jump time is chosen. The process starts from $x$
and follows the flow $\varphi(x, t)$ until the first jump time $T_1$ which occurs either 
spontaneously in a Poisson-like fashion with rate $\lambda(\varphi(x, t))$ or when the 
flow $\varphi(x, t)$ hits the boundary of the state-space. In both cases, the location of the 
process at the jump time $T_1$, denoted by $Z_1 = X(T_1)$, is selected by the transition 
measure $Q(\varphi(x, T_1), \cdot)$ and the motion restarts from this new point as before. 
This fully describes a piecewise continuous trajectory for $\{X(t)\}_{t\geq 0}$ with jump 
times $\{T_k\}$ and post jump locations $\{Z_k \}$, and which evolves according to the 
flow $\varphi$ between two jumps.

Since the seminal work of Davis, PDMPs have been heavily studied from the theoretical 
perspective, we may refer for instance the readers to 
\cite{Costa2008,Bakhtin2012,Azais2014,M15,BBMZ12,BBMZ15} among many others. 

From the applied point of view, and more precisely in biological applications, these 
processes are sometimes referred as hybrid, and are especially appealing for their 
ability to capture both continuous (deterministic) dynamics and discrete (probabilistic 
or deterministic) events.  First applications date back at least to the
studies of the cell cycle model
\cite{Lasota1992,Lasota1999}, and more recent works, to name just a few, in neurobiology 
\cite{PTW,DL15,DL15},  cell population and branching models  
\cite{BDMV,cloez,marguet2016,DHKR,L10,CH16}, gene expression models 
\cite{YZLM,Mackey2013,DFR}, food contaminants \cite{B15,BCT} or multiscale chemical reaction 
network models \cite{Crudu2012,Ball2006,Kurtz2013,Hepp2015}. See also 
\cite{Kouretas2006,Bressloff2014,Rudnicki2015} for selected reviews on hybrid models in biology.

The paper is organized as follows. In Section 1, we present long time behavior results 
for switched flows in population dynamics. In Section 2,  we present many-to-one formulas 
and description of the trait of an individual uniformly sampled in a general branching 
model applied to structured population dynamic. In Section 3, we present limit theorems 
and time scale separation in integrate-and-fire models used in neuroscience.  Finally, in 
Section 4, we derive asymptotic moments formulas in a stochastic model of gene expression.


\section{Long time behavior of some PDMP for population dynamics}
 
 In this section, we present recent results on long time behavior and exponential convergence 
 of some PDMP used in population dynamics, in particular, for modeling population growth in 
 a varying environment.

We consider processes $(X_t,I_t)_{t\geq 0}$, where the first component $X_t$ has continuous 
paths in a continuous space $E$, namely a subset of $\mathbb{R}^d$, for some integer $d$, 
and  the second component $I_t$ is a pure jump process on a finite state space $F$ or in 
$\mathbb{N}$.  
The continuous variable represents the number of individuals (or more precisely the 
population density or the relative abundance) of a population and the discrete variable 
models the population environment. In a fixed environment $I_t=i\in F$, we assume 
that $(X_t)_{t\geq 0}$ evolves as the solution of an ordinary differential equation, that is
\begin{equation}
\label{eq:ODE}
\forall t\geq 0\,,  \quad \partial_t X_t = F^{(i)} (X_t),
\end{equation}
where $F^{(i)}$ is some smooth function. For instance, the oldest and maybe the most 
famous model to represent the evolution of a population is the choice $E=\mathbb{R}_+$ and
\begin{equation}
\label{eq:lin-malthus}
F^{(i)} : x\mapsto a_i x, \quad a_i \in \mathbb{R}.
\end{equation}
When there is no variability in the environment, solutions are given by exponential functions and there is either explosion or extinction according to the sign of $a_i$. We develop in Subsection \ref{sect:malthus} this toy model when the environment varies. Almost all properties (almost-sure convergence, moments...) can be derived and it gives a first hint to understand more complicated models such as multi-dimensional linear models or non-linear models. These generalizations are described in Subsection \ref{secr:linear}, and \ref{sect:general}, respectively. For non-linear population models, maybe one of the most famous models is given by the Lotka-Volterra equations (also known as the predator-prey equations), with state-space $E= \mathbb{R}_+^2$ and
$$
F^{(i)}(x,y) = \begin{pmatrix}
\alpha_i x (1- a_i x- b_i y)\\
\beta_i x (1- c_i x- d_i y)\\
\end{pmatrix}, \quad a_i,b_i,c_i,d_i,\alpha_i,\beta_i \in \mathbb{R}_+.
$$

To end the introduction of this section, let us mention that all of these models are particular instance of a class of switching Markov model, see for instance \cite{BBMZ15,YZ10} for general references. Some other examples of application are developed in \cite{M15,FGM, Mon16}.

\subsection{Malthus model}
\label{sect:malthus}

Let us consider, in this subsection, that $(I_t)_{t\geq 0}$ is an irreducible continuous time Markov chain on a finite state space $F$. We denote by $\nu$ its unique invariant probability measure.  The process $(X_t)_{t\geq0}$ will be the solution of
$$
\forall t\geq 0, \ \partial_t X_t = a_{I_t} X_t.
$$
This means that $F^{(i)}$ is given by  \eqref{eq:lin-malthus}. Thus, we have
$$
\forall t\geq 0, \ X_t = X_0 e^{\int_0^t a_{I_s} ds}.
$$
The almost-sure behavior of this process is easy to understand. Indeed, by the ergodic theorem, we have that
$$
\lim_{t\to + \infty } \frac{1}{t} \int_0^t a_{I_s} ds = \sum_{i\in F} a_i\nu(\{i\}). = \nu(a).
$$
Then, we have the following dichotomy: either $\nu(a)>0$ and $(X_t)_{t\geq 0}$ tends almost-surely to infinity at an exponential rate, or $\nu(a)<0$ and $(X_t)_{t\geq 0}$ tends to $0$. The second statement can be understood as the extinction of the population, even if, in contrast with classical birth and death processes, $(X_t)_{t\geq 0}$ never hits $0$; that is the extinction time (or the hitting time of $0$) is almost-surely infinite. 
\begin{figure}
	\begin{center}
	\includegraphics[scale=0.5]{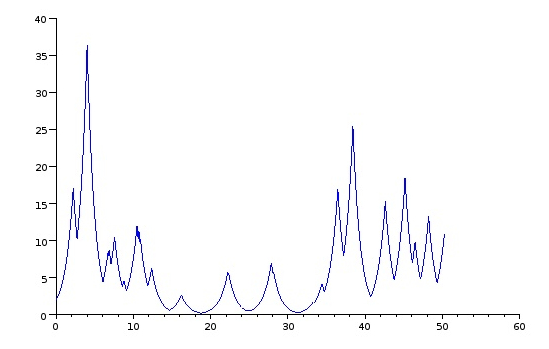}
	\end{center}
	\caption{Trajectory of $(X_t)_{t\geq 0}$ in the Malthus model.}
	\label{fig:malthus}
\end{figure}

The convergence of moments of $(X_t)_{t\geq 0}$ is more tricky than the almost-sure convergence. Indeed, one cannot use a dominated convergence theorem or a Jensen inequality (which is in the wrong way). Nevertheless, we have

\begin{lemma}[Convergence of moments]
	If $\nu(a)>0$ then 
	$$
	\forall q>0 , \quad \lim_{t\to +\infty} \mathbb{E}[X_t^q] =+\infty.
	$$
	If $\nu(a)<0$ then there exists $p>0$ such that 
	$$
	\forall q\leq p, \quad \lim_{t\to +\infty} \mathbb{E}[X_t^q] =0.
	$$
\end{lemma}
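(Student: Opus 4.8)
The plan is to reduce the statement to the exponential growth rate of the Laplace transform of the additive functional $A_t:=\int_0^t a_{I_s}\,ds$: since $X_t^q=X_0^q\,e^{qA_t}$, one has---for a deterministic positive initial value $X_0=x$, the general case being analogous as soon as $\E[X_0^q]<\infty$---that $\E[X_t^q]=x^q\,\E[e^{qA_t}]=:x^q\,g_q(t)$, so everything hinges on the asymptotics of $g_q$.

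The case $\nu(a)>0$ is in fact immediate from the almost-sure behaviour recalled above: there $A_t/t\to\nu(a)>0$ a.s., hence $A_t\to+\infty$ and $X_t^q\to+\infty$ almost surely, and Fatou's lemma gives $\liminf_{t\to\infty}\E[X_t^q]\ge\E[\liminf_{t\to\infty}X_t^q]=+\infty$. (One could also use Jensen, which here points the right way: $g_q(t)\ge e^{q\E[A_t]}$ with $\E[A_t]/t=\tfrac1t\int_0^t\E[a_{I_s}]\,ds\to\nu(a)$ by ergodicity.) The real content of the lemma is the case $\nu(a)<0$: then $X_t^q\to0$ a.s., but this carries no information on $\E[X_t^q]$ without a uniform-integrability input, which---as noted in the text---is provided neither by dominated convergence nor by Jensen (which now points the wrong way).

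To treat $\nu(a)<0$ I would use the Feynman--Kac / spectral description of $g_q$. Let $Q$ be the generator of $(I_t)_t$ and set $M_\theta:=Q+\theta\,\mathrm{diag}(a_i)_{i\in F}$ for $\theta\in\mathbb R$; differentiating $t\mapsto\E_i[e^{\theta A_t}\ind_{I_t=j}]$ yields the Feynman--Kac identity $\E_i[e^{\theta A_t}\ind_{I_t=j}]=(e^{tM_\theta})_{ij}$. As $F$ is finite and $(I_t)_t$ is irreducible, $e^{tM_\theta}$ has strictly positive entries for $t>0$, so Perron--Frobenius furnishes a simple, real, strictly dominant eigenvalue $\Lambda(\theta)$ with positive left and right eigenvectors; summing over $j$ gives $g_q(t)\sim c(q)\,e^{t\Lambda(q)}$ with $c(q)>0$, so in particular $\tfrac1t\log g_q(t)\to\Lambda(q)$ and the sign of $\Lambda(q)$ decides whether $\E[X_t^q]$ tends to $+\infty$, to a positive constant, or to $0$. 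Three standard properties of $\Lambda$ then finish the proof: $\Lambda(0)=0$ (the dominant eigenvalue of $Q$ is $0$); $\Lambda$ is convex, as a pointwise limit of the convex maps $\theta\mapsto\tfrac1t\log\E[e^{\theta A_t}]$ (convexity at fixed $t$ being H\"older's inequality); and $\Lambda'(0)=\nu(a)$, by first-order perturbation theory for the simple eigenvalue $\Lambda(\theta)$, since at $\theta=0$ the simple eigenvalue is $0$ with left eigenvector $\nu$ and right eigenvector $\mathbf 1$, normalised so that $\langle\nu,\mathbf 1\rangle=1$, whence $\Lambda'(0)=\langle\nu,\mathrm{diag}(a)\,\mathbf 1\rangle=\sum_{i\in F}a_i\nu(\{i\})=\nu(a)$. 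When $\nu(a)<0$, $\Lambda$ is smooth with $\Lambda(0)=0$ and $\Lambda'(0)<0$, hence strictly decreasing on some interval $[0,p]$, $p>0$; therefore $\Lambda(q)<0$ for all $q\in(0,p]$ and $\E[X_t^q]=x^q g_q(t)\to0$ on that range. (The restriction to positive $q$ is essential: convexity forces $\Lambda(q)>0$ for $q<0$, so negative moments blow up.)

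I expect the only real obstacle to be this spectral input---identifying the logarithmic growth rate of $g_q$ with the Perron eigenvalue $\Lambda(q)$ of $M_q$ (with a strictly positive prefactor $c(q)$), and establishing the convexity of $\Lambda$ together with $\Lambda'(0)=\nu(a)$; the Feynman--Kac identity and the final sign discussion are routine. A variant avoiding the sharp asymptotics in the delicate case $\nu(a)<0$ is to split $g_q(t)=\E\bigl[e^{qA_t}\ind_{A_t\le\frac{\nu(a)}{2}t}\bigr]+\E\bigl[e^{qA_t}\ind_{A_t>\frac{\nu(a)}{2}t}\bigr]$, bound the first term by $e^{\frac{q\nu(a)}{2}t}$ and the second by $e^{q(\max_{i}a_i)t}\,\mathbb{P}\bigl(A_t>\tfrac{\nu(a)}{2}t\bigr)$, and invoke the large-deviation upper bound $\mathbb{P}(A_t\ge ct)\le e^{-t I(c)+o(t)}$ with $I(c)>0$ for $c>\nu(a)$: this again forces $\E[X_t^q]\to0$ for $q$ small, but is essentially the Legendre dual of the same computation.
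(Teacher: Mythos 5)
Your proposal is correct, and for the delicate case $\nu(a)<0$ it follows essentially the same route as the paper: the Feynman--Kac representation $\E[X_t^q]=\E[X_0^q]\,\mu_0 e^{t(A+qD)}\mathbf 1$, the Perron--Frobenius control of this quantity by $e^{t\lambda_q}$ with $\lambda_q$ the dominant eigenvalue, and the perturbation identity $\partial_q\lambda_q\vert_{q=0}=\nu(a)$ obtained from the left-eigenvector equation at $q=0$ (where $\lambda_0=0$ and $v_0=\nu$). The one genuine difference is the case $\nu(a)>0$: you dispatch it by Fatou's lemma from the almost-sure divergence $X_t^q\to+\infty$, which is a legitimate and more elementary shortcut, whereas the paper reuses the spectral argument to get $\lambda_q>0$ for small $q$ and then extends to all $q$ by Jensen; your extra observations (convexity of $q\mapsto\lambda_q$, the large-deviation variant) are correct but not needed. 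The only implicit hypotheses you should make explicit, which the paper also uses silently, are that $X_0>0$ almost surely (so that Fatou indeed yields $+\infty$) and that $X_0$ is independent of $(I_t)_{t\ge0}$ (so that the expectation factorises).
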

The proof which follows comes from \cite{BGM}; see also \cite{SY05}.
\begin{proof}
	If we denote by $A$ the generator of the process $(I_t)_{t\geq 0}$ and $\mu_0$ the vector corresponding to its initial law, then the Feynman-Kac formula states that, for every $p>0$, 
	$$
	\mathbb{E}[X_t^p]= \mathbb{E}[X_0^p]\mathbb{E}\left[e^{\int_0^t p a_{I_s} ds} \right]= \mathbb{E}[X_0^p] \mu_0 e^{t(A+ pD)}\mathbf{1},
	$$
	where it is the classical exponential of matrices, $D$ is the diagonal matrix such that $D_{i,i}=a_i$ and $\mathbf{1}$ is the vector made of ones. Now, classical results on matrices (including Perron-Frobenius Theorem) give the existence of two constants $c_p, C_p>0$ such that 
	$$
	c_p  e^{\lambda_p t} \leq \mu_0 e^{t(A+pD)} \mathbf{1}  \leq C_p e^{\lambda_p t},
	$$
	where $\lambda_p$ is the (unique) eigenvalue of largest real part of the matrix $(A+pD)$. It then remains to understand the sign of $\lambda_p$ to understand the moment behaviour. By definition of $\lambda_p$,  there exists $v_p$ such that
	$$
	v_p (A+pD)  = \lambda_p v_p.
	$$  
	Moreover, for $p=0$, we have $\lambda_0=0$ and $v_0 =\nu$. Again classical results on matrices ensure that $p \mapsto \lambda_p$ and $p\mapsto v_p$ are smooth and then differentiating in $p$, we obtain
	$$
	v_p D  +  \partial_p v_p (A+pD) = v_p \partial_p  \lambda_p + \lambda_p \partial_p v_p. 
	$$
	In particular, for $p=0$ and multiplying by $\mathbf{1}$ by the right, we have $ \nu(a) = (\partial_p \lambda_p)_{\scriptscriptstyle{\vert p=0}}$. As a consequence, if $\nu(a)>0$, then $p\mapsto \lambda_p$ is increasing in a neighborhood of  $0$ and for all $p>0$ small enough we have $\lambda_p>0$. Thus, $\mathbb{E}[X_t^p]$ tends to infinity for $p$ small enough, and then for all $p>0$ by the Jensen inequality. In contrary, if $\nu(a)<0$ then there exists $p>0$ such that $\lim_{t \to +\infty} \mathbb{E}[X_t^p]=0$ and also for all $q<p$, again by Jensen inequality.
\end{proof}

Understanding the long time behavior of moments is primordial when we are interested in the convergence to some invariant law more general than $\delta_0$. Indeed, moments provide Lyapunov-type functionals ensuring that the process returns frequently in a compact-set.

\subsection{Others linear models}
\label{secr:linear}

From a modeling point of view, the results of the previous section are easily understandable. Indeed, it is enough that the environment has in mean a positive effect to ensure an exponential growth, while if the environment is, in mean, not favorable, then the population goes to extinct.

However, note that this interpretation only holds because the growth parameter is well-chosen. Indeed, let us first consider a discrete time analogous of the Malthus model: the sequence $(Y_n)_{n\geq 0}$ satisfying
$$
\forall n\geq 0, \quad  Y_{n+1} = \Theta_n Y_n,
$$
where $(\Theta_n)_n$ is a sequence of i.i.d. non-negative random variables.
Here $\Theta_n$ represents the mean number of  children per individuals at generation $n$ (all individuals have the same number of children which depends on the environment). If $(\Theta_n)_{n\geq 0}$ is a constant sequence then $Y_n = \Theta_1^n Y_0$ and a dichotomy occurs with respect to a critical value $\Theta_1=1$. When $(\Theta_n)_{n\geq 0}$ is a sequence of i.i.d random variable, then there is also a dichotomy but the critical value (to be smaller or larger than $1$) is no longer $\mathbb{E}[\Theta_1]$ but  $\exp\left(\mathbb{E}[\ln(\Theta_1)]\right)$. Indeed, we have
$$
\forall n\geq 0, \quad  Y_{n} = Y_0 \prod_{k=0}^{n-1} \Theta_k = X_0 e^{\sum_{k=0}^{n-1} \ln(\Theta_k)}.
$$
This is exactly the same phenomenon for Galton-Watson chain in random environment, see for instance \cite{BV17} and \cite[Section 2.9.2]{HPV}.

There is a similar (but more complex, maybe) problem in upper dimension. Let us consider that $E=\mathbb{R}^2$, $F=\{0,1\}$, and $(I_t)_{t\geq 0}$ jumps from $0$ to $1$ and from $1$ to $0$ with the same rate $\lambda>0$, and
\begin{equation}
\label{eq:plan}
F^{(0)} : v \mapsto \begin{pmatrix}
-1 & 4 \\
-1/4  & -1
\end{pmatrix} \cdot v, \quad F^{(1)}:v\mapsto \begin{pmatrix}
-1 & -1/4 \\
4  & -1
\end{pmatrix} \cdot v.
\end{equation}
Then the solutions of the equation $\partial_t \begin{pmatrix}
x \\
y
\end{pmatrix} = F^{(1)} \begin{pmatrix}
x \\
y
\end{pmatrix}$ are given by
\begin{equation}
\label{eq:solution}
\forall t\geq 0, \quad 
\left \{
\begin{array}{c @{=} c}
x(t) & e^{-t} (\cos(t)x(0) - \sin(t)y(0)/4)  \\
y(t) & e^{-t} (4\sin(t) x(0) + \cos(t) y(0)). \\
\end{array}
\right.
\end{equation}
It is easy to see that there exists some constants $C>0$ such that for all $t\geq 0$, 
$$
\Vert (x(t),y(t))\Vert \leq Ce^{-t} \Vert (x(0),y(0))\Vert.
$$
This property is also satisfied by the solutions of  $\partial_t \begin{pmatrix}
x \\
y
\end{pmatrix} = F^{(0)} \begin{pmatrix}
x \\
y
\end{pmatrix}$. However, we have the following theorem.

\begin{theorem}[\cite{BBMZ14}]
	\label{th:plan}
	Let $(X_t)_{t\geq 0}$ be the solution of Equation \eqref{eq:ODE} with flows defined in \eqref{eq:plan}.
	There exists $\beta_c>0$ such that if $\lambda<\beta_c$ then 
	$$
	\lim_{t \to + \infty} \Vert X(t) \Vert =0,
	$$
	and if $\lambda>\beta_c$ then $\lim_{t \to + \infty} \Vert X(t) \Vert = + \infty$. Moreover, the speed of convergence is exponential.
\end{theorem}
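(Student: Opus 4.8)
The plan is to pass to polar coordinates and read the dichotomy off the sign of the top Lyapunov exponent. Write $X(t)=r(t)(\cos\theta(t),\sin\theta(t))$ with $X(0)\neq 0$ (the linear flows never reach $0$, so $r(t)>0$). Both matrices in \eqref{eq:plan} are of the form $-\mathrm{Id}+N^{(i)}$ with $N^{(i)}$ of zero diagonal, so a direct computation of $\dot r/r=\langle\dot X,X\rangle/|X|^2$ and $\dot\theta=(x\dot y-y\dot x)/|X|^2$ gives $\frac{d}{dt}\log r(t)=-1+\tfrac{15}{8}\sin(2\theta(t))$ and $\frac{d}{dt}\theta(t)=g_{I_t}(\theta(t))$, where $g_0(\theta)=-(\tfrac{17}{8}-\tfrac{15}{8}\cos 2\theta)<0$ and $g_1(\theta)=\tfrac{17}{8}+\tfrac{15}{8}\cos 2\theta>0$. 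The key structural point is that the radial growth rate does not see the environment: it is the fixed $\pi$-periodic function $Q(\theta)=-1+\tfrac{15}{8}\sin 2\theta$, while all of the (autonomous, $\pi$-periodic) stochastic dynamics is carried by $\Phi_t:=(\theta(t),I_t)$, a PDMP on $\mathcal C\times\{0,1\}$ with $\mathcal C=\mathbb R/\pi\mathbb Z$. Thus $\log\|X(t)\|=\log\|X(0)\|+\int_0^t Q(\theta(s))\,ds$.

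The two angular fields $g_0,g_1$ are everywhere nonzero with opposite constant signs and the environment jumps at rate $\lambda>0$, so $\Phi_t$ is irreducible and (by a standard Harris-type argument) geometrically ergodic with a unique invariant probability $\mu_\lambda$. The ergodic theorem then gives, almost surely, $\frac1t\log\|X(t)\|\to\Lambda(\lambda):=-1+\tfrac{15}{8}\int\sin(2\theta)\,d\mu_\lambda$, so $\|X(t)\|\to 0$ (resp. $\to\infty$) exponentially fast a.s. as soon as $\Lambda(\lambda)<0$ (resp. $>0$). It therefore suffices to prove that $\lambda\mapsto\Lambda(\lambda)$ is continuous, tends to $-1$ as $\lambda\downarrow 0$, tends to $\tfrac78$ as $\lambda\uparrow\infty$, and is strictly increasing. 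Morally: for $\lambda\downarrow 0$, slow switching lets $\theta$ equilibrate, in each fixed environment, to the occupation measure $\propto d\theta/|g_i(\theta)|$, which is symmetric enough that $\int\sin(2\theta)/|g_i(\theta)|\,d\theta=0$; for $\lambda\uparrow\infty$, averaging forces $\theta$ to concentrate at the stable equilibrium $\pi/4$ of the averaged field $\tfrac12(g_0+g_1)=\tfrac{15}{8}\cos 2\theta$, where $\sin 2\theta=1$, consistently with the averaged matrix $\tfrac12(F^{(0)}+F^{(1)})$ having spectral abscissa $\tfrac78$.

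The substance of the proof is the explicit description of $\mu_\lambda$ and the resulting monotonicity. Writing the stationary Fokker--Planck system $(g_ip_i)'=\pm\lambda(p_0-p_1)$ for the densities $(p_0,p_1)$, the sum $(g_0p_0+g_1p_1)'=0$ shows the angular flux is a constant; a Fredholm argument, using $\oint(1/g_0+1/g_1)\,d\theta=0$ — which reflects the clean identity $|g_0(\theta)|\,g_1(\theta)=1+(\tfrac{15}{8})^2\sin^2 2\theta$ and makes the relevant primitive equal to $\arctan(\tfrac{15}{8}\sin 2\theta)$ — forces that constant to vanish, leaving $p_i(\theta)\propto|g_i(\theta)|^{-1}\exp\!\bigl(\lambda\arctan(\tfrac{15}{8}\sin 2\theta)\bigr)$. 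In other words $\mu_\lambda$ is an exponential family in $\lambda$ with sufficient statistic $\arctan(\tfrac{15}{8}\sin 2\theta)$, so $\frac{d}{d\lambda}\Lambda(\lambda)=\tfrac{15}{8}\,\mathrm{Cov}_{\mu_\lambda}\!\bigl(\sin 2\theta,\ \arctan(\tfrac{15}{8}\sin 2\theta)\bigr)>0$, because $\arctan(\tfrac{15}{8}\,\cdot\,)$ is a strictly increasing function of $\sin 2\theta$ and $\mu_\lambda$ is non-degenerate (comonotone functions are positively correlated). Feeding $\lambda=0$ and $\lambda\to\infty$ into this density recovers the two limits $-1$ and $\tfrac78$, and continuity of $\lambda\mapsto\Lambda(\lambda)$ is immediate; hence there is a unique $\beta_c>0$ with $\Lambda<0$ on $(0,\beta_c)$ and $\Lambda>0$ on $(\beta_c,\infty)$. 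The delicate part is really the bookkeeping that produces the explicit density — in particular recognising that the correct primitive is $\arctan(\tfrac{15}{8}\sin 2\theta)$ — together with the Fredholm solvability check; once the density is in hand, monotonicity and the two asymptotics are short.
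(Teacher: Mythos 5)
Your argument is correct: the polar decomposition, the observation that the radial rate $-1+\tfrac{15}{8}\sin 2\theta$ is common to both fields, the explicit invariant density $p_i\propto |g_i|^{-1}\exp\bigl(\lambda\arctan(\tfrac{15}{8}\sin 2\theta)\bigr)$ of the angular PDMP (the identity $|g_0|g_1=1+(\tfrac{15}{8})^2\sin^2 2\theta$ and the vanishing-flux step both check out), and the exponential-family covariance argument for strict monotonicity of the Lyapunov exponent all hold, and together they do yield the full dichotomy with a single threshold $\beta_c$. The present paper does not prove Theorem~\ref{th:plan} but only quotes it from \cite{BBMZ14} with a heuristic; your proof is essentially a self-contained reconstruction of the strategy of that reference, with only routine points left implicit (smoothness of the stationary density and Harris-type ergodicity of $(\theta_t,I_t)$ justifying the almost-sure ergodic average).
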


For this type of results for more general vector fields, see for instance \cite{BBMZ14, M15, LMR}. 
Let us briefly explain this phenomenon. One can see that, whatever the initial conditions of 
the solutions  $(x,y)$ of \eqref{eq:solution}, the map $t\mapsto \| (x(t), y(t)) \|$ is not decreasing; 
see for instance Figure~\eqref{fig:distance}.

\begin{figure}
	\begin{center}
		\includegraphics[scale=0.5]{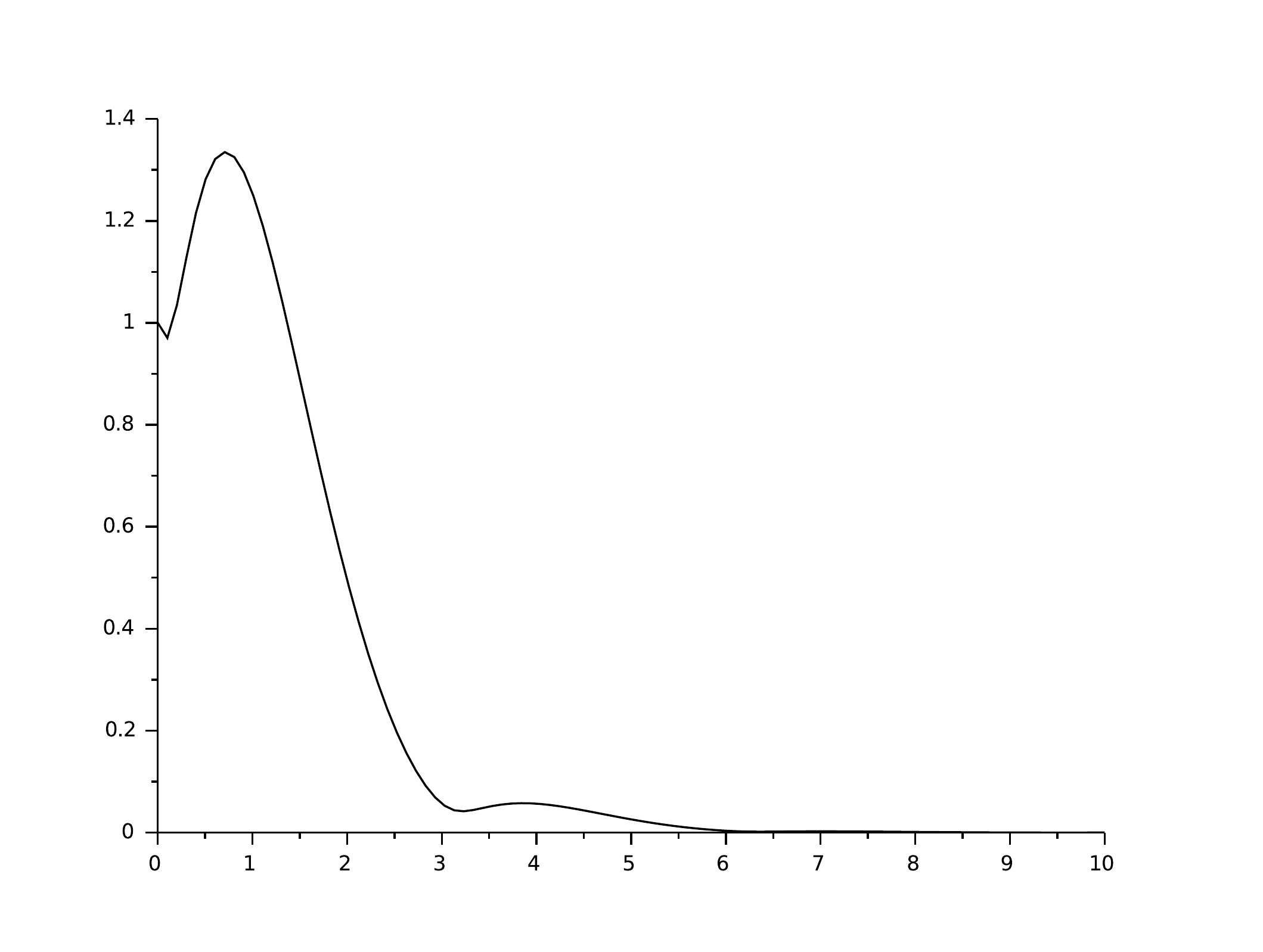}
	\end{center}
	\caption{Graph of $t\mapsto \| (x(t) , y(t)) \| = e^{-t} \sqrt{15 \sin^2(t) +1}$ for \eqref{eq:solution} when $(x_0,y_0)=(1,0)$.}
	\label{fig:distance}
\end{figure}

Then, heuristically, we see that if $I$ jumps sufficiently fast (before the decay of the distance) then the distance may grow. However, if $I$ is too slow, then $(X_t)_{t\geq 0}$ essentially follows one of the two flows and goes rapidly to $0$.

Switching linear models can have even more unpredictable behavior than the previous one. Indeed, in \cite{LMR}, the author exhibits two different linear functions $F^{(0)}, F^{(1)}$ on $\mathbb{R}^2$ and two critical values $\beta_2>\beta_1$ such that the switched process $(X_t)_{t\geq 0}$ tends to infinity for at least one $\lambda \in (\beta_1,\beta_2)$ and to $0$ if $\lambda \in (0, \beta_1) \cup (\beta_2 , + \infty)$.

\subsection{A general theorem for convergence}
\label{sect:general}

In this subsection, we expose a condition similar to Subsection \ref{sect:malthus} for ensuring convergence to an equilibrium (not necessary $\delta_0$). As pointed out in the previous section, we have to measure precisely the growth of each underlying flow. So we assume that for all $i\in F$, there exists $\rho(i) \in \mathbb{R}$ such that
\begin{equation}
\label{eq:coercive}
\langle x-y, F^{(i)}(x) - F^{(i)} (y) \rangle \leq - \rho(i) \Vert x- y \Vert^2.
\end{equation}
The parameter $\rho(i)$ measures the contraction of a solution $\dot{u} = F^{(i)}(u)$ with respect to the norm $\Vert\cdot \Vert$. Indeed, for two solutions, $u^1$ and $u^2$, we have 
$$
\forall t\geq 0 , \quad \Vert u^1(t) - u^2(t) \Vert^2 \leq e^{- \rho(i) t } \Vert u^1(0) - u^2(0) \Vert^2.
$$
Note that the next result does not rely on a specific norm, but it has to be the same for all flows. A sufficient condition to prove \eqref{eq:coercive} is given by the next lemma.


\begin{lemma}
	Let $G: \mathbb{R}^d \to \mathbb{R}^d$ be a $C^1$ map and let $\mathbf{J}_x G$ be its 
	Jacobian matrix at the point $x\in \mathbb{R}^d$. If for all $u,x\in \mathbb{R}^d$,
	$$
	\langle u, \mathbf{J}_x G \cdot u \rangle \leq - \rho \Vert u \Vert^2,
	$$
	then, for all $x,y \in \mathbb{R}^d$,
	\begin{equation}
	\label{eq:coercivebis}
	\langle x-y, G(x) - G (y) \rangle \leq - \rho \Vert x- y \Vert^2.
	\end{equation}
	In particular, if $G=\nabla V$, for some $C^2$ function $V$, then \eqref{eq:coercivebis} holds under the classical condition that $V$ is strongly convex with constant $\rho$; namely for all $x\in \mathbb{R}^2$,
	$$
	\textrm{Hess}_x V \geq \rho I,
	$$
	in the sense of quadratic form.
\end{lemma}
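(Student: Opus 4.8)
The plan is to reduce the global inequality \eqref{eq:coercivebis} to the pointwise hypothesis on the Jacobian by integrating along the straight segment joining $y$ to $x$. Fix $x,y\in\mathbb{R}^d$, set $\gamma(t)=y+t(x-y)$ for $t\in[0,1]$, and consider the curve $t\mapsto G(\gamma(t))$. Since $G$ is $C^1$ this curve is $C^1$ on $[0,1]$, so by the chain rule and the fundamental theorem of calculus (applied componentwise),
$$
G(x)-G(y)=\int_0^1 \frac{d}{dt}G(\gamma(t))\,dt=\int_0^1 \mathbf{J}_{\gamma(t)}G\cdot(x-y)\,dt.
$$

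Next I would pair this identity with $x-y$. The inner product $\langle x-y,\cdot\rangle$ is a continuous linear functional, so it may be brought inside the (Riemann or Lebesgue) integral of the continuous $\mathbb{R}^d$-valued integrand, giving
$$
\langle x-y,\,G(x)-G(y)\rangle=\int_0^1 \langle x-y,\,\mathbf{J}_{\gamma(t)}G\cdot(x-y)\rangle\,dt.
$$
Now apply the hypothesis with $u=x-y$ at the point $\gamma(t)$: for every $t\in[0,1]$ one has $\langle x-y,\mathbf{J}_{\gamma(t)}G\cdot(x-y)\rangle\le-\rho\Vert x-y\Vert^2$. Integrating this bound over $[0,1]$ yields exactly $\langle x-y,G(x)-G(y)\rangle\le-\rho\Vert x-y\Vert^2$, which is \eqref{eq:coercivebis}. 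That is the whole argument; the only matters needing a word of care are that $G\in C^1$ suffices for the curve to be $C^1$ (so the fundamental theorem of calculus applies) and that the interchange of $\langle x-y,\cdot\rangle$ with $\int_0^1$ is legitimate, both of which are routine.

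For the last assertion I would specialize to the gradient-flow field: taking $G=-\nabla V$ with $V\in C^2$, one has $\mathbf{J}_xG=-\,\mathrm{Hess}_xV$, which is symmetric, and strong convexity of $V$ with constant $\rho$ means precisely $\mathrm{Hess}_xV\ge\rho I$ in the sense of quadratic forms, i.e. $\langle u,\mathbf{J}_xG\cdot u\rangle=-\langle u,\mathrm{Hess}_xV\cdot u\rangle\le-\rho\Vert u\Vert^2$ for all $u$; the conclusion then follows from the general part just proved. There is no real obstacle here: the lemma is essentially a one-line consequence of the fundamental theorem of calculus, and the \emph{only} subtle point is keeping the sign conventions straight (the coercivity/contraction estimate \eqref{eq:coercive} is the one relevant for gradient \emph{descent} dynamics $\dot u=-\nabla V(u)$).
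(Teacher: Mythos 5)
Your argument is correct and is essentially the paper's own: the authors apply the mean value theorem to the scalar function $t\mapsto \langle x-y,\,G(x)-G(ty+(1-t)x)\rangle/\Vert x-y\Vert^{2}$ along the same segment, which is just the Lagrange form of your fundamental-theorem-of-calculus computation. You were also right to read the final clause as concerning $G=-\nabla V$: as literally written ($G=\nabla V$ with $\mathrm{Hess}_{x}V\geq\rho I$) the Jacobian hypothesis fails by a sign, and the gradient-descent field is clearly what is intended for the contraction estimate \eqref{eq:coercive}.
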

\begin{proof}
	It is classic to derive the previous bounds using the mean value theorem for the following map
	$$
	\varphi: t\mapsto \frac{\langle x-y, G(x) - G (ty +(1-t)x) \rangle}{\Vert x- y \Vert^2}.
	$$
\end{proof}

We can now state the main result of this subsection.
\begin{theorem}[\cite{CH15}]
	\label{th:general}
	Assume that $(I_t)_{t\geq 0}$ is an irreducible Markov process on a finite state space $F$ with invariant measure $\nu$, $(X_t)_{t\geq 0}$ satisfies \eqref{eq:ODE}, and that Eq.~\eqref{eq:coercive} holds. If
	$$
	\sum_{i\in F} \rho(i) \nu(i)> 0,
	$$
	then $(X_t,I_t)$ converges in law to a unique invariant law.
\end{theorem}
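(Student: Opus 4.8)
The plan is to use a coupling argument combined with a Lyapunov/Foster-type criterion, which is the standard route for PDMP ergodicity and fits the contractivity hypothesis \eqref{eq:coercive} naturally. First I would construct a coupling of two copies $(X_t,I_t)$ and $(\tilde X_t,\tilde I_t)$ of the process, starting from different initial conditions $(x,i)$ and $(\tilde x,i)$ but sharing the \emph{same} environment process $I_t=\tilde I_t$ (this is legitimate once the two discrete components have met, and since $F$ is finite and $I$ is irreducible, they meet in finite time with exponential tails; one first runs an independent coupling of the two environment chains until coalescence, which costs only a multiplicative constant). Once the environments are identified, the two continuous components obey the same switched ODE, so by the contraction estimate displayed just before the theorem,
\[
\Vert X_t - \tilde X_t\Vert^2 \;\le\; \Vert X_{t_0}-\tilde X_{t_0}\Vert^2 \, \exp\!\left(-\int_{t_0}^t \rho(I_s)\,ds\right).
\]

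The key step is then to control the exponent $\int_0^t \rho(I_s)\,ds$. By the ergodic theorem for the finite irreducible chain $I$, $\frac1t\int_0^t \rho(I_s)\,ds \to \sum_{i\in F}\rho(i)\nu(i) >0$ almost surely, and in fact one has exponential concentration: there exist $\theta>0$ and $C<\infty$ with $\Esp{\exp(-\theta\int_0^t \rho(I_s)\,ds)} \le C e^{-\gamma t}$ for some $\gamma>0$ (this is again a Perron--Frobenius / Feynman--Kac statement about the matrix $A-\theta\,\mathrm{diag}(\rho)$, exactly as in the Malthus lemma proof above, using that the top eigenvalue is $0$ at $\theta=0$ with derivative $-\sum_i\rho(i)\nu(i)<0$). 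Combining this with the coupling inequality and Jensen gives, for a suitable small power, $\Esp{\Vert X_t-\tilde X_t\Vert^s} \le C' e^{-\gamma' t}\,\Esp{\text{(initial distance)}^s}$ uniformly over bounded initial data; equivalently the semigroup is a contraction in a Wasserstein-type distance $W$ on $E\times F$ (with the discrete part weighted by the coalescence time of $I$).

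From here the conclusion is routine: the contraction in $W$ makes the semigroup $P_t$ a uniform quasi-contraction on the space of probability measures with finite $s$-th moment, so iterating along $t=nT$ for $T$ large yields a Cauchy sequence and hence a unique fixed point $\pi$, which is the invariant law, and $\mathrm{Law}(X_t,I_t)\to\pi$ in $W$ (hence weakly). One still needs a moment bound to know the state space of measures is nonempty and stable — i.e. that $\sup_t\Esp{\Vert X_t\Vert^s}<\infty$ — but this follows by the same contraction applied with $\tilde X_0$ a fixed point of one of the flows (or directly from \eqref{eq:coercive} together with boundedness of each $F^{(i)}$ near such a point), giving a Foster--Lyapunov drift. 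The main obstacle, and the only genuinely delicate point, is that the contraction only operates \emph{after} the environments have coalesced and is \emph{undone} while they have not yet coalesced (the distance $\Vert X_t-\tilde X_t\Vert$ can grow when the two copies follow different flows); making the bookkeeping rigorous requires controlling the growth of $\Vert X_t-\tilde X_t\Vert$ on the random coalescence interval and checking that its (exponential-tailed) length is dominated by the (positive) average contraction rate — precisely where the hypothesis $\sum_i\rho(i)\nu(i)>0$ is used, and where one must be a little careful because individual $\rho(i)$ may be negative.
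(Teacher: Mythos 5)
Your strategy is sound and correctly isolates the two load-bearing ingredients: the Feynman--Kac/Perron--Frobenius control of $\Esp{\exp\left(-\theta\int_0^t\rho(I_s)\,ds\right)}$ for small $\theta>0$ (which is exactly the mechanism of Lemma~1.1 and the only place where the hypothesis $\sum_i\rho(i)\nu(i)>0$ enters), and the need for an a priori moment bound to absorb the growth of the distance before the environments coalesce. It is, however, not the route the paper takes. The proof of \cite{CH15} sketched in the text does not build an explicit coupling: it invokes the generalization of Harris' theorem developed in \cite{HMS}, precisely because the classical Harris theorem (minorization on a small set) fails for these PDMPs, and the main work is the construction of a Lyapunov function $V$ satisfying \eqref{eq:lyapunov} together with a contractivity condition in a distance-like function; the abstract theorem then delivers uniqueness and convergence. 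Your hands-on Wasserstein coupling --- couple the two environment chains until coalescence, then contract the continuous components along the shared environment --- is essentially the alternative proof of \cite{BBMZ12} that the paper explicitly acknowledges. The coupling buys an explicit exponential rate in a Wasserstein metric $W_s$ for a small power $s$; the weak-Harris route buys robustness (the paper notes it extends to dependent environments, non-deterministic underlying dynamics, etc.). Two details in your sketch need care: (i) for the squared norm the exponent should be $\exp\left(-2\int\rho(I_s)\,ds\right)$; (ii) controlling $\Vert X_t-\tilde X_t\Vert$ on the coalescence interval requires bounding $\Vert F^{(i)}(\tilde X_t)-F^{(j)}(\tilde X_t)\Vert$, hence a moment bound on the running process and not only on the initial data --- and note that a flow with $\rho(i)<0$ need not have a fixed point, so the reference point should be an arbitrary fixed $x_0$, with \eqref{eq:coercive} applied to $\Vert X_t-x_0\Vert$ and Lemma~1.1 applied to a small power; this is exactly the Lyapunov function of \eqref{eq:lyapunov} that both proofs ultimately rely on.
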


The proof of this theorem is given in \cite{CH15,BBMZ12}. The proof of \cite{CH15} is based on a generalization (developed in \cite{HMS}) of the classical Harris Theorem \cite{HM11}, which does not hold in general for PDMP (see for instance the introduction of \cite{BBMZ12}). One of the main point of the proof is the construction of a Lyapunov function $V$, in the sense that there exist, $C,\gamma,K>0$ such that
\begin{equation}
\label{eq:lyapunov}
\forall t\geq0, \ \mathbb{E}[V(X_t,I_t)] \leq Ce^{-\gamma t} V(X_0,I_0) +K.
\end{equation}
The construction of $V$ and the control of its moments are based on Lemma 1.1.

Note that another proof of the previous theorem is given in \cite{BBMZ12}. Nevertheless, the proof of \cite{CH15} takes the advantage to be generalizable in many contexts (dependent environment, non-deterministic underlying dynamics, asymptotic assumptions...). See \cite{CH15} for details.

Closely related articles \cite{BL14,MH16,MZ16} deal with a fluctuating Lotka-Volterra model. They show that random switching between two environments both favorable to the same specie can lead to the extinction of this specie or coexistence of both species.

As Theorem \ref{th:general}, their proofs are based on the construction of a Lyapunov function. Nevertheless instead of controlling the exit from a compact set of $\mathbb{R}$ to infinity, this Lyapunov function is used to control the exit from a compact set included in $(0, \infty)^2$ to the axes $\{(0,y) | y\geq0\}$ or $\{(x,0,) | x\geq 0 \}$.  A complete description of this model is given in \cite{MH16}. Note however that their counter-intuitive result is similar to the one of Theorem \ref{th:plan}. It is based on the fact that, when $I$ jumps sufficiently fast (namely $\lambda$ being large in Theorem \ref{th:plan}), then $X$ is close to follow the following deterministic dynamics:
$$
\forall t\geq 0, \quad \partial_t X_t= \sum_{i\in F} \nu(i) F^{(i)} (X).
$$


Finally, we end this section mentioning that such hybrid framework can also be applied to models where the population is represented by a discrete variable, and/or the environment fluctuates continuously. One can cite for instance \cite{C15}, where the author studies a model of interaction between a population of insects and a population of trees. Another example is the chemostat model (a type of bioreactor), introduced in \cite{CY} and studied in \cite{CMMM,CJL,CF14,CF16}.
Lyapunov functions are again a key theoretical tool to study long time behavior for such models. However, we point out that extinction events may occur in discrete population models, and that the interplay between the discrete and continuous components makes the problem more difficult than standard absorbing time studies in purely discrete population models. Nevertheless, it was proved in \cite[Theorem 3.4]{CF16} that extinction event is almost-surely finite and has furthermore exponential tails. This generalizes part of a result of \cite{CMMM}.

\section{Uniform sampling in a branching structured population}
In this section, which is a short version of \cite{marguet2016}, we give a characterization of the trait of a uniformly sampled individual among the population at time $t$ in a structured branching population. 

\subsection{Introduction}
We consider a branching Markov process where each individual $u$ is characterized by a trait $\left(X_t^u,\ t\geq 0\right)$ which dynamic follows an $\mathcal{X}$-valued Markov process, where $\mathcal{X}:=\widetilde{\mathcal{X}}\times \mathbb{R}_+$ and $\widetilde{\mathcal{X}}\subset\left(\mathbb{R}_+\right)^{d}$ for some $d\geq 1$. For any $x\in\mathcal{X}$, the last coordinate corresponds to a time coordinate and we denote by $\widetilde{x}\in\widetilde{\mathcal{X}}$ the vector of the first $d$ coordinates. We assume that this trait influences the lifecycle of each individual (in terms of lifetime, number of descendants and inheritance of the trait). Then, an interesting problem consists in the characterization of the trait of a "typical" individual in the population. This is the question we address here.

Let us describe the process in more details. We consider a strongly continuous contraction semi-group with associated infinitesimal generator $\mathcal{G}:\mathcal{D}(\mathcal{G})\subset\mathcal{C}_b(\mathcal{X})\rightarrow\mathcal{C}_b(\mathcal{X})$, where $\mathcal{C}_b(\mathcal{X})$ denotes the space of continuous bounded functions from $\mathcal{X}$ to $\mathbb{R}$. Then, each individual $u$ has a trait $(X_t^u,\ t\geq 0)$ which evolves as a Markov process defined as the unique $\mathcal{X}$-valued c\`adl\`ag solution of the martingale problem associated with $(\mathcal{G},\mathcal{D}(\mathcal{G}))$. An individual with trait $x$ dies at an instantaneous rate $B(x)$, where $B$ is a continuous function from $\mathcal{X}$ to $\mathbb{R}_+$. It is replaced by $A_{u}(x)$ children, where $A_{u}(x)$ is a $\mathbb{N}$-valued random variable  with distribution $\left(p_{k}\left(x\right),k\geq 0\right)$. For convenience, we assume that $p_1(x)\equiv 0$ for all $x\in\mathcal{X}$. The trait of the descendants at birth depends on the trait of the mother at death. For all $k\in\mathbb{N}$, let $P^{(k)}(x,\cdot)$ be the probability measure on $\mathcal{X}^k$ corresponding to the trait distribution at birth of the $k$ descendants of an individual with trait $x$. We denote by $P_{j}^{(k)}\left(x,\cdot\right)$ the $j$th marginal distribution of $P^{(k)}$ for all $k\in\mathbb{N}$ and $j\leq k$.

Finally, we denote by $\mathcal{M}_{P}(\mathcal{X})$ the set of point measures on $\mathcal{X}$. Following Fournier and M\'el\'eard \cite{fournier2004microscopic}, we work in $\mathbb{D}\left(\mathbb{R}_{+},\mathcal{M}_{P}\left(\mathcal{X}\right)\right)$, the state of c\`adl\`ag measure-valued Markov processes. For any $Z\in \mathbb{D}\left(\mathbb{R}_{+},\mathcal{M}_{P}\left(\mathcal{X}\right)\right)$, we write:
\[
Z_t=\sum_{u\in V_t} \delta_{X_t^u},\ t\geq 0,
\]
the measure-valued process describing the dynamic of the population where $V_t$ denotes the set of all individuals in the population at time $t$. We will denote by $N_t$ the cardinality of $V_t$ and by:
\begin{align*}
	m(x,s,t)=\mathbb{E}\left(N_t\big|Z_s=\delta_x\right),
\end{align*}
its expected value, for $0\leq s \leq t$.

An example of such a process is the size-structured population where each individual grows exponentially fast. For more details we refer the reader to \cite{DHKR} or \cite{bertoin2016}. Then, at rate $B(X_t^u)$, the individual $u$ splits at time $t$ in two daughter cells of size $X_t^u/2$.
\begin{figure}
	\includegraphics[scale=0.5]{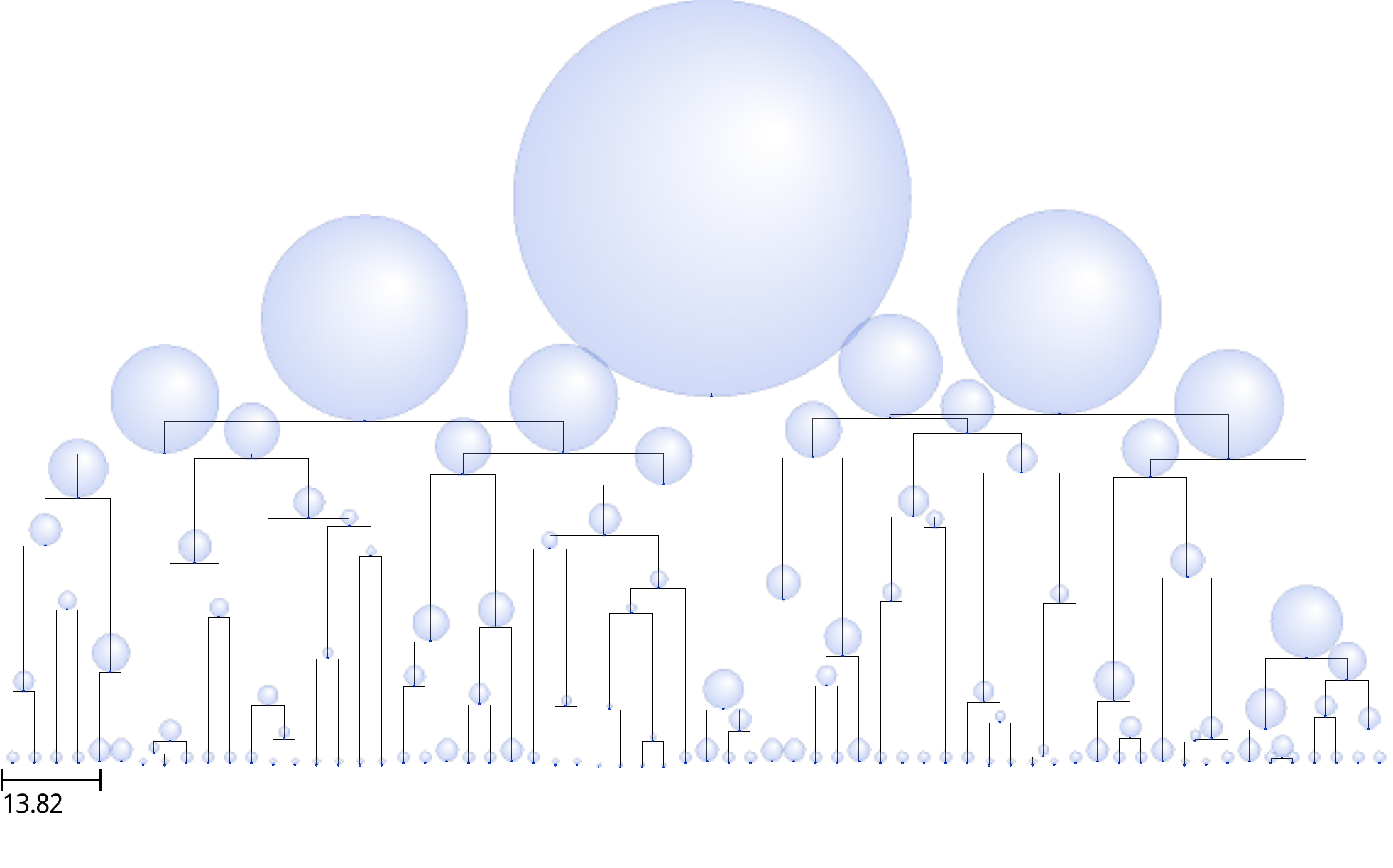}
	\caption{Descending genealogy from an individual with size $1$ until time $T=50$ of a size-structured population. Each cell grows exponentially at rate $0.01$ and divide at rate $B(x)=x$.}
	\label{fig:arbre}
\end{figure}
Figure \ref{fig:arbre} is a realization of such a process. The diameter of each circle corresponds to the size of the individual at division and the length of the branch represents the lifetime of each individual. In particular, we notice the link between the lifetime and the size of each individual: the bigger the cell is, the shorter its lifetime is.

In Section \ref{sec:exis}, we give the chosen assumptions on the model to ensure that our process is well-defined. Then, in Section \ref{sampling}, we describe the process corresponding to the trait of a "typical" individual in the population. Finally, in Section \ref{sec:large}, we explain why this process corresponds  to the trait of a uniformly sampled individual in a large population approximation. 

\subsection{Definition and existence of the structured branching process\label{existence}}\label{sec:exis}

To define rigorously the structured branching process on $\mathbb{R}_+$, we need to ensure that almost surely the number of individuals in the population does not blow up in finite time. 

As explained in the previous description of the process, the lifetime of each individual depends on the dynamic of the trait through the division rate $B$. One way of dealing with this dependency is to assume that the division rate is upper bounded by some constant $\overline{B}$. Then, in the case of a binary division, the expected number of individuals in the population at time $t$ is upper bounded by the expected value of a Yule process with birth rate $\overline{B}$ at time $t$ which is equal to $\exp(\overline{B}t)$ and which is bounded on compact sets. In the case of an unbounded division rate, the same reasoning applies if we can control the excursions of the dynamic of the trait. 
Thus, we consider two sets of hypotheses: the first one controls what happens regarding divisions (in term of rate of division and of mass creation) and the second one controls the dynamic of the trait between divisions.
\begin{assumptionA}\label{assu:debut1}
	We consider the following assumptions:
	\begin{enumerate}
		\item There exist $b_1,b_2\geq 0$ and $\gamma\geq 0$ such that for all $x\in\mathcal{X}$,
		\[B(x)\leq b_1\left| x\right|^{\gamma}+b_2.\]
		\item There exists $\underbar{x}\in\widetilde{\mathcal{X}}$ such that	for all $x\in\mathcal{X}$, $k\in\mathbb{N}$:
		\[
		\int_{\mathcal{X}^k}\left(\sum_{i=1}^{k}\widetilde{y}_i\right)P^{(k)}(x,dy_1\ldots dy_k)\leq \widetilde{x}\vee \underline{x},\text{ componentwise}.
		\]		
		\item
		There exists $\overline{m}\geq 0$ such that for all $x\in \mathcal{X}$, 
		\[ m(x):=\sum_k kp_k(x)\leq \overline{m}.\]
		
	\end{enumerate}
\end{assumptionA}
The first point controls the division rate. The second point means that we consider a fragmentation process with a possibility of mass creation at division when the mass is small enough. In particular, clones are allowed in the case of bounded traits and bounded number of descendants and any finite type branching structured process can be considered.

As explained before, we make a second assumption to control the behavior of traits between divisions.
\begin{assumptionA}\label{assu:debut2}
	There exist $c_1,c_2\geq 0$ such that for all $x\in\mathcal{X}$:
	\begin{align*}
		\mathcal{G}h_{\gamma}(x)\leq c_1 h_{\gamma}(x)+c_2,
	\end{align*}
	where $\gamma$ is defined in Assumption $\ref{assu:debut1}$ and for $x\in\left(\mathbb{R}_+\right)^d$, $h_{\gamma}(x)=|x|^{\gamma}=\left(\sum_{i=1}^d x_i\right)^{\gamma}$.
\end{assumptionA}

This assumption ensures in particular that the trait does not blow up in finite time in the case of a deterministic dynamic for the trait. Assumptions \ref{assu:debut1}(1) and \ref{assu:debut2} are linked via the parameter $\gamma$ which controls the balance between the growth of the population and the dynamic of the trait. In particular, if Assumption \ref{assu:debut1} is satisfied for $\gamma=0$, the division rate is bounded and Assumption \ref{assu:debut2} is always satisfied. 

Under Assumptions \ref{assu:debut1} and \ref{assu:debut2}, the previously described measure-valued process is well-defined as the strongly unique solution of a stochastic differential equation. 

The proof of existence relies on a recursive construction of a solution via the sequence of jumps time of the population. Then, we prove that this sequence is unique conditionally to the Poisson point measure determining the jumps in the population and to the stochastic flows corresponding to the dynamic of the trait. Finally, using Assumptions \ref{assu:debut1} and \ref{assu:debut2}, we prove that the number of individuals in the population does not blow up in finite time. This is detailed in \cite{marguet2016} (Theorem 2.3.).

\subsection{The trait of sampled individuals at a fixed time : Many-to-One formula\label{sampling}}

In order to characterize the trait of a uniformly sampled individual, the spinal approach (\cite{chauvin1988kpp},\cite{lyons1995conceptual}) consists in following a "typical" individual in the population whose behavior summarizes the behavior of the entire population. In this section, we give the dynamic of the trait of a typical individual: it is a Markov process called the auxiliary process.

From now on, we assume that for all $x\in\mathcal{X}$, $t\geq 0$ and $s\leq t$, $m(x,s,t)\neq 0$.  With a slight abuse of notation, we denote by $X_s^u$ the trait of the unique ancestor living at time $s$ of $u$.

Let $f$ be a non-negative measurable function and $x\in\mathcal{X}$. The idea is to notice that the following operator:
\begin{align}\label{eq:semigroup_auxi}
	P_{r,s}^{(t)}f(x):=\frac{\mathbb{E}\left[\sum_{u\in V_t} f\left(X_s^u\right)\big| Z_r=\delta_x\right]}{m(x,r,t)},
\end{align}
is a conservative (non-homogeneous) semi-group. Then, the auxiliary process is defined as the time-inhomogeneous Markov process with associated family of semi-groups $\left(P_{r,s}^{(t)},r\leq s\leq t\right)$. It describes the dynamic of the trait of a "typical" individual in the population in the sense that it satisfies a so-called "Many-to-One" formula. This formula is true under Assumptions \ref{assu:debut1}, \ref{assu:debut2} and under the following technical assumptions:
\begin{assumptionA}\label{assu:doeblin}
	There exists a function $C$ such that for all $j\leq k,\ j,k\in\mathbb{N}$ and $0\leq s\leq t$, we have:
	\[
	\sup_{x\in\mathcal{X}}\sup_{s\in[0,t]}\int_\mathcal{X}\frac{m(y,s,t)}{m(x,s,t)}P_j^{(k)}(x,dy)\leq C(t),\ \forall t\geq 0.
	\]
\end{assumptionA}
This assumption tells us that we control uniformly in $x$ the benefit or the penalty of a division.
\begin{assumptionA}\label{assu:differentiable}
	For all $t\geq 0$ and $x\in\mathcal{X}$ we have:
	\begin{itemize}
		\item[-]$s\mapsto m(x,s,t)$ is differentiable on $[0,t]$ and its derivative is continuous on $[0,t]$,
		\item[-] for all $f\in\mathcal{D}(\mathcal{A}):=\left\lbrace f\in\mathcal{D}(\mathcal{G})\text{ s.t. } m(\cdot,s,t)f\in\mathcal{D}(\mathcal{G})\ \forall t\geq 0,\ \forall s\leq t \right\rbrace$, $s\mapsto \mathcal{G}(m(\cdot,s,t)f)(x)$ is continuous,
	\end{itemize}
\end{assumptionA}
We can now state the Many-to-One formula.
\begin{theorem} Under Assumptions \ref{assu:debut1}, \ref{assu:debut2}, \ref{assu:doeblin} and \ref{assu:differentiable}, for all $t\geq 0$, for all $x_0\in\mathcal{X}$ and for all non-negative measurable functions $f:\mathcal{X}\rightarrow\mathbb{R}_+$, we have:
	\begin{equation}\label{mtomesurable}
	\E_{\delta_{x_0}}\left[\sum_{u\in V_{t}}f\left(X_{s}^{u}\right)\right]=m(x_0,0,t)\E_{x_0}\left[f\left(Y_{s}^{(t)}\right)\right],
	\end{equation}
	where $\left(Y_s^{(t)},s\leq t\right)$ is an inhomogeneous-Markov process with infinitesimal generator $\left(\mathcal{A}_s^{(t)},\mathcal{D}(\mathcal{A})\right)$ given for $f\in\mathcal{D}(\mathcal{A})$ and $x\in\mathcal{X}$ by:
	\begin{align*}
		\mathcal{A}_{s}^{(t)}f(x)= & \widehat{\mathcal{G}}_{s}^{(t)}f(x)
		+\widehat{B}_{s}^{(t)}(x)\int_{\mathcal{\mathcal{X}}}\left(f\left(y\right)-f\left(x\right)\right)\widehat{P}_{s}^{(t)}\left(x,dy\right),
	\end{align*}
	where:
	\begin{align*}
		\widehat{\mathcal{G}}_{s}^{(t)}f(x)=\frac{\mathcal{G}\left(m(\cdot,s,t)f\right)(x)-f\left(x\right)\mathcal{G}\left(m(\cdot,s,t)\right)(x)}{m(x,s,t)},
	\end{align*}
	\begin{align*}
		\widehat{B}_{s}^{(t)}(x)=B(x)\int_{\mathcal{X}}\frac{m(y,s,t)}{m(x,s,t)}m(x,dy),
	\end{align*}
	\begin{align*}
		\widehat{P}_{s}^{(t)}(x,dy)=\frac{m(y,s,t)}{m(x,s,t)}m(x,dy)\left(\int_{\mathcal{X}}\frac{m(y,s,t)}{m(x,s,t)}m(x,dy)\right)^{-1}.
	\end{align*}
\end{theorem}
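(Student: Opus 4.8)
The plan is to prove, first, that the family $\big(P_{r,s}^{(t)}\big)_{r\le s\le t}$ from \eqref{eq:semigroup_auxi} is a conservative time-inhomogeneous Markov semigroup on $[0,t]$, and, second, to compute its infinitesimal generator and check that it is the $\mathcal A_s^{(t)}$ of the statement. Formula \eqref{mtomesurable} is then just the case $r=0$, $x=x_0$ of \eqref{eq:semigroup_auxi}, multiplied through by $m(x_0,0,t)$, once we know that $P_{0,s}^{(t)}f(x_0)=\E_{x_0}\big[f(Y_s^{(t)})\big]$ for the Markov process $(Y_s^{(t)})$ attached to $\big(P_{r,s}^{(t)}\big)$.

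Everything rests on the \emph{first-moment} many-to-one identity. Let $M_{r,s}h(x):=\mathbb{E}\big[\sum_{v\in V_s}h(X_s^v)\,\big|\,Z_r=\delta_x\big]$ be the first-moment semigroup of the branching process constructed in Section \ref{sec:exis} (it is time-homogeneous, since $\mathcal G$, $B$, the $p_k$ and the $P^{(k)}$ do not depend on time), so that $m(x,s,t)=M_{s,t}\mathbf 1(x)$. Grouping the individuals of $V_t$ according to their ancestor alive at time $s$ and conditioning on $\mathcal F_s$, the branching/Markov property of the measure-valued process gives, for $r\le s\le t$ and $f\ge0$ measurable,
$$
m(x,r,t)\,P_{r,s}^{(t)}f(x)=\mathbb{E}\Big[\sum_{u\in V_t}f(X_s^u)\,\Big|\,Z_r=\delta_x\Big]=\mathbb{E}\Big[\sum_{v\in V_s}m(X_s^v,s,t)f(X_s^v)\,\Big|\,Z_r=\delta_x\Big]=M_{r,s}\big(m(\cdot,s,t)f\big)(x).
$$
Taking $f\equiv\mathbf 1$ and using $M_{r,s}M_{s,t}=M_{r,t}$ gives $P_{r,s}^{(t)}\mathbf 1=\mathbf 1$; applying the identity twice together with Chapman--Kolmogorov for $M$ gives $P_{r,u}^{(t)}P_{u,s}^{(t)}=P_{r,s}^{(t)}$ for $r\le u\le s\le t$; and $P_{r,r}^{(t)}=\mathrm{Id}$ is immediate. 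Hence $\big(P_{r,s}^{(t)}\big)$ is a conservative inhomogeneous transition semigroup and, by the Kolmogorov extension theorem, the family of transition operators of an inhomogeneous Markov process $(Y_s^{(t)})_{s\le t}$ started from $x_0$. Here Assumptions \ref{assu:debut1} and \ref{assu:debut2} guarantee $0<m(x,s,t)<\infty$ (non-blow-up, as in the existence theorem), while Assumption \ref{assu:doeblin} guarantees that the candidate jump rate $\widehat B_s^{(t)}(x)=B(x)\int\frac{m(y,s,t)}{m(x,s,t)}m(x,dy)$ is finite, where $m(x,dy):=\sum_k p_k(x)\sum_{j=1}^k P_j^{(k)}(x,dy)$ is the offspring mean measure (so $m(x,\mathcal X)=m(x)\le\overline m$).

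To identify the generator, write $\psi_s(\cdot):=m(\cdot,s,t)$ for the fixed $t$. The first-moment semigroup has generator $(\mathcal L g)(x)=(\mathcal G g)(x)+B(x)\big(\int_{\mathcal X}g(y)\,m(x,dy)-g(x)\big)$, and $\psi_s$ solves the backward equation $\partial_s\psi_s=-\mathcal L\psi_s$ on $[0,t]$; Assumption \ref{assu:differentiable} is precisely what makes $s\mapsto\psi_s(x)$ differentiable and $s\mapsto\mathcal G(\psi_s f)(x)$ continuous, and the definition of $\mathcal D(\mathcal A)$ ensures $\psi_s f\in\mathcal D(\mathcal G)$ so that the operators below are meaningful. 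Differentiating the identity $m(x,r,t)P_{r,s}^{(t)}f(x)=M_{r,s}(\psi_s f)(x)$ in $s$, using $\partial_s M_{r,s}=M_{r,s}\mathcal L$ and $\partial_s\psi_s=-\mathcal L\psi_s$, and applying the identity once more to $g:=\psi_s^{-1}\big(\mathcal L(\psi_s f)-(\mathcal L\psi_s)f\big)$ yields
$$
m(x,r,t)\,\partial_s P_{r,s}^{(t)}f(x)=M_{r,s}\big[\mathcal L(\psi_s f)-(\mathcal L\psi_s)f\big](x)=m(x,r,t)\,P_{r,s}^{(t)}\big(\mathcal A_s^{(t)}f\big)(x),\qquad\mathcal A_s^{(t)}f:=\frac{\mathcal L(\psi_s f)-f\,\mathcal L\psi_s}{\psi_s}.
$$
This is the Kolmogorov forward equation, so $\mathcal A_s^{(t)}$ is the infinitesimal generator of $(Y_s^{(t)})$. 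Finally, substituting the expression for $\mathcal L$ into the definition of $\mathcal A_s^{(t)}$, the two terms $\pm B(x)\psi_s(x)f(x)$ cancel in the numerator and one is left with
$$
\mathcal A_s^{(t)}f(x)=\frac{\mathcal G(\psi_s f)(x)-f(x)\,\mathcal G\psi_s(x)}{\psi_s(x)}+B(x)\int_{\mathcal X}\frac{\psi_s(y)}{\psi_s(x)}\big(f(y)-f(x)\big)m(x,dy);
$$
multiplying and dividing the jump term by $\int\frac{\psi_s(y)}{\psi_s(x)}m(x,dy)$ recognises it as $\widehat B_s^{(t)}(x)\int(f(y)-f(x))\widehat P_s^{(t)}(x,dy)$, i.e.\ exactly the operator $\mathcal A_s^{(t)}$ of the statement. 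Taking $r=0$, $x=x_0$ then gives \eqref{mtomesurable}.

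I expect the delicate part to be the generator identification, not the semigroup algebra: since $\mathcal G$ is a possibly unbounded generator, differentiating under the expectation, the relation $\partial_s M_{r,s}=M_{r,s}\mathcal L$ on the functions at hand, and the fact that $\psi_s f$ genuinely stays in $\mathcal D(\mathcal G)$ (i.e.\ that $\mathcal D(\mathcal A)$ is a rich enough class to run the Doob-type computation) all need justification. This is exactly where Assumption \ref{assu:differentiable} (regularity in $s$ of $m(\cdot,s,t)$ and of $\mathcal G(m(\cdot,s,t)f)$) and Assumption \ref{assu:doeblin} (uniform-in-$x$ control of the ratios $m(y,s,t)/m(x,s,t)$, hence of $\widehat B_s^{(t)}$ and of the extra branching contributions produced by the transform) enter. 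A secondary issue is to make the informal ``branching plus Markov property'' used in the first display rigorous at the level of the measure-valued SDE of Section \ref{sec:exis}.
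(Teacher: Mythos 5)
Your proposal is correct, and its backbone --- show that $\big(P_{r,s}^{(t)}\big)_{r\le s\le t}$ is a conservative inhomogeneous semigroup, then differentiate in $s$ to read off the generator --- is the same as the paper's. The genuine difference is in how the semigroup and the regularity needed to differentiate it are obtained. You build $P_{r,s}^{(t)}$ explicitly as a Doob $h$-transform of the first-moment semigroup $M_{r,s}$ with $h=m(\cdot,s,t)$, and then the whole theorem reduces to the single identity $m(x,r,t)P_{r,s}^{(t)}f(x)=M_{r,s}\big(m(\cdot,s,t)f\big)(x)$ together with the Kolmogorov equations $\partial_s M_{r,s}=M_{r,s}\mathcal L$ and $\partial_s m(\cdot,s,t)=-\mathcal L\, m(\cdot,s,t)$ for $\mathcal L=\mathcal G+B\big(\int_{\mathcal X}\cdot\;m(x,dy)-\cdot\big)$; your algebra identifying $\mathcal A_s^{(t)}$ with the stated $\widehat{\mathcal G}_s^{(t)}+\widehat B_s^{(t)}(\widehat P_s^{(t)}-\mathrm{Id})$ checks out. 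The paper instead (via Lemma 3.2 of \cite{marguet2016}) characterizes the family $\big(P_{r,s}^{(t)}\big)$ as the \emph{unique} solution of an integro-differential (Duhamel-type) equation, with Assumption \ref{assu:doeblin} entering through a Gronwall argument to obtain uniqueness, and only then differentiates under Assumption \ref{assu:differentiable}. The two routes buy different things: yours makes the bias structure of the auxiliary process completely transparent and is shorter; the paper's uniqueness route is what actually \emph{justifies} the Kolmogorov equations for $M_{r,s}$ on the relevant functions when $B$ is unbounded and the population size is random --- which is exactly the step you flag at the end as needing justification, and where Assumption \ref{assu:doeblin} really works (controlling uniformly in $x$ the ratios $m(y,s,t)/m(x,s,t)$ so that the relevant interchanges of derivative, sum over $V_s$ and unbounded operators go through), rather than merely ensuring finiteness of $\widehat B_s^{(t)}$. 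So your argument is a correct reorganization of the same proof, with the hard analytic content deferred to assumed properties of the first-moment semigroup.
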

\subsubsection*{Comments on the proof.}To prove the Many-to-One formula \eqref{mtomesurable}, we first show that the family of semi-groups $\left(P_{r,s}^{(t)},r\leq s\leq t\right)$ is uniquely defined as the unique solution of an integro-differential equation (see Lemma 3.2 in \cite{marguet2016}). In particular, we need Assumption \ref{assu:doeblin} to prove the uniqueness. Then, the infinitesimal generator $\left(\left(\mathcal{A}_{s}^{(t)}\right)_{s\leq t},\mathcal{D}\left(\mathcal{A}\right)\right)$ of the auxiliary process is obtained by differentiation of the semi-group $\left(P_{r,s}^{(t)},r\leq s\leq t\right)$ using Assumption \ref{assu:differentiable}.

\begin{remark}
	We can also prove a pathwise version of \eqref{mtomesurable} using a monotone class argument (Theorem 3.1. in \cite{marguet2016}).  
\end{remark}
Unlike previous works on this subject (\cite{georgii2003supercritical}, \cite{hardy2009spine}, \cite{cloez}), the existence of our auxiliary process does not rely on the existence of spectral elements for the mean operator of the branching process. In particular, we can apply this result to models with a varying environment. 
The dynamic of this auxiliary process heavily depends on the comparison between $m(x,s,t)$ and  $m(y,s,t)$, for $x,y\in\mathcal{X}$. It emphasizes several bias due to growth of the population. First, the auxiliary process jumps more than the original process, if jumping is beneficial in terms of number of descendants. This phenomenon of time-acceleration also appears for examples in \cite{chauvin1988kpp}, \cite{lyons1995conceptual} or \cite{hardy2009spine}. Moreover, the reproduction law favors the creation of a large number of descendant as in \cite{BDMV} and the non-neutrality favors individuals with an "efficient" trait at birth in terms of number of descendants. Finally, a new bias appears on the dynamic of the trait because of the combination of the random evolution of the trait and non-neutrality. Indeed, if the dynamic of the trait is deterministic, we have $\widehat{\mathcal{G}}_s^{(t)}f(x)=\mathcal{G}f(x)$.

\subsection{Ancestral lineage of a uniform sampling at a fixed time in a large population}\label{infiniteparticle}\label{sec:large}
The Many-to-One formula \eqref{mtomesurable} gives the law of the trait of a typical individual in the population. But so far, we did not prove that it corresponds to the trait of a uniformly sampled individual. In particular, we have to take into account that the number of individuals in the population is stochastic and depends on the dynamic of the trait. Using the law of large numbers, we can approximate the number of individuals in a population starting for $n$ individuals, divided by $n$, by the mean number of individual in the population. That is why we now look at the ancestral lineage of a uniform sampling in a large population.

It only makes sense to speak of a uniformly sampled individual at time $t$ if the population does not become extinct before time $t$. For all $t\geq 0$, let $\Omega_t=\left\lbrace N_t>0\right\rbrace$ denote the event of survival of the population. Let $\nu\in\mathcal{M}_P(\mathcal{X})$ be such that:
\begin{align*}
	\mathbb{P}_{\nu}(\Omega_t)>0.
\end{align*}
We set
\begin{align*}
	\nu_n:=\sum_{i=1}^n\delta_{X_i},
\end{align*}
where $X_i$ are i.i.d. random variables with distribution $\nu$.
For $t\geq 0$, we denote by $U(t)$ the random variable with uniform distribution on $V_t$ conditionally on $\Omega_t$ and by $\left(X^{U(t)}_s,\ s\leq t\right)$ the process describing the trait of a sampling along its ancestral lineage. 
If $X$ is a stochastic process, we denote by $X^{\nu}$ the process with initial distribution $\nu\in\mathcal{M}_P(\mathcal{X})$.
\begin{theorem}\label{th:grdpop} Under Assumptions \ref{assu:debut1},\ref{assu:debut2}, \ref{assu:doeblin} and \ref{assu:differentiable}, for any $t\geq 0$, the following convergence in law in $\mathbb{D}\left([0,t],\mathcal{X}\right)$ holds:
	\begin{align*}
		X_{[0,t]}^{U(t),\nu_n}\underset{n\rightarrow+\infty}{\longrightarrow}Y_{[0,t]}^{(t),\pi_t},\ \text{where }\pi_t(dx)=\frac{m(x,0,t)\nu(dx)}{\int_{\mathcal{X}}m(x,0,t)\nu(dx)}.
	\end{align*}
\end{theorem}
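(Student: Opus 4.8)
The plan is to identify the limit law by testing against an arbitrary bounded continuous functional $\Phi$ on the Skorokhod space $\mathbb{D}([0,t],\mathcal{X})$, combining the branching property, the strong law of large numbers and the Many-to-One formula \eqref{mtomesurable}. Since $X^{U(t),\nu_n}_{[0,t]}$ is, by construction, the trait lineage of an individual chosen uniformly in $V_t$ \emph{conditionally on the survival event} $\Omega_t^{(n)}:=\{N_t^{\nu_n}>0\}$, I would start from the identity
\[
\E\big[\Phi(X^{U(t),\nu_n}_{[0,t]})\big]
=\frac{1}{\mathbb{P}(\Omega_t^{(n)})}\,
\E\Big[\ind_{\Omega_t^{(n)}}\,\frac{1}{N_t^{\nu_n}}\sum_{u\in V_t}\Phi(X^u_{[0,t]})\Big]
\]
and show that the right-hand side converges to $\E_{\pi_t}[\Phi(Y^{(t)}_{[0,t]})]$, the integral of $\Phi$ against the law of $Y^{(t),\pi_t}_{[0,t]}$.

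First I would decompose along the founders. By the branching property, the population issued from $\nu_n=\sum_{i=1}^n\delta_{X_i}$ is the superposition of $n$ subpopulations evolving independently, the $i$-th one issued from $\delta_{X_i}$; writing $N_t^{(i)}$ for its size at time $t$ and $S_t^{(i)}:=\sum_{u\in V_t^{(i)}}\Phi(X^u_{[0,t]})$, we have $N_t^{\nu_n}=\sum_{i=1}^n N_t^{(i)}$, $\sum_{u\in V_t}\Phi(X^u_{[0,t]})=\sum_{i=1}^n S_t^{(i)}$, and the pairs $(N_t^{(i)},S_t^{(i)})_{i\ge 1}$ are i.i.d.\ (the founders being i.i.d.\ with law $\nu$ and the subpopulations independent given the founders). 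Conditioning on $X_i=x$ and applying \eqref{mtomesurable}, in the pathwise form recalled in the subsequent remark (valid for bounded measurable path functionals), gives $\E[N_t^{(i)}\mid X_i=x]=m(x,0,t)$ and $\E[S_t^{(i)}\mid X_i=x]=m(x,0,t)\,\E_x[\Phi(Y^{(t)}_{[0,t]})]$, whence $\E[N_t^{(i)}]=\int_{\mathcal{X}}m(x,0,t)\,\nu(dx)=:\bar m_t$ and $\E[S_t^{(i)}]=\int_{\mathcal{X}}m(x,0,t)\,\E_x[\Phi(Y^{(t)}_{[0,t]})]\,\nu(dx)$, both finite (the latter is at most $\|\Phi\|_\infty\bar m_t$). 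The strong law of large numbers then yields, almost surely as $n\to\infty$,
\[
\frac1n N_t^{\nu_n}\longrightarrow \bar m_t>0,\qquad
\frac1n\sum_{u\in V_t}\Phi(X^u_{[0,t]})\longrightarrow \int_{\mathcal{X}} m(x,0,t)\,\E_x[\Phi(Y^{(t)}_{[0,t]})]\,\nu(dx).
\]

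Next I would divide and pass to the limit. Setting $R_n:=0$ off $\Omega_t^{(n)}$ and $R_n:=\frac{1}{N_t^{\nu_n}}\sum_{u\in V_t}\Phi(X^u_{[0,t]})$ on $\Omega_t^{(n)}$, one has $|R_n|\le\|\Phi\|_\infty$; since $\mathbb{P}\big((\Omega_t^{(n)})^c\big)=(1-\mathbb{P}_\nu(\Omega_t))^n$ is summable ($\mathbb{P}_\nu(\Omega_t)>0$), Borel--Cantelli gives $\ind_{\Omega_t^{(n)}}\to1$ almost surely and $\mathbb{P}(\Omega_t^{(n)})\to1$; and on the almost-sure event where both averages above converge, $\ind_{\Omega_t^{(n)}}R_n$ equals, for $n$ large, the ratio of those two averages and hence tends to $\bar m_t^{-1}\int_{\mathcal{X}} m(x,0,t)\,\E_x[\Phi(Y^{(t)}_{[0,t]})]\,\nu(dx)=\E_{\pi_t}[\Phi(Y^{(t)}_{[0,t]})]$, by the definition of $\pi_t$. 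Bounded convergence (recall $\mathbb{P}(\Omega_t^{(n)})\to1$) then gives $\E[\Phi(X^{U(t),\nu_n}_{[0,t]})]\to\E_{\pi_t}[\Phi(Y^{(t)}_{[0,t]})]$. Since this holds for every $\Phi\in\mathcal{C}_b(\mathbb{D}([0,t],\mathcal{X}))$ and the limit is identified explicitly as the law of $Y^{(t),\pi_t}_{[0,t]}$, the asserted convergence in law follows, with no separate tightness step.

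Routine computations aside, I expect two main difficulties. The first is the integrability $\bar m_t=\int_{\mathcal{X}} m(x,0,t)\,\nu(dx)<\infty$ required to run the law of large numbers: this is precisely where Assumptions \ref{assu:debut1} and \ref{assu:debut2} re-enter, through the bound on $m(\cdot,0,t)$ obtained (as in the existence proof) from the domination of the population size by a Yule-type process once the trait excursions are controlled --- one also needs $\nu$ to carry a finite $\gamma$-moment. The second is to ensure that \eqref{mtomesurable} is available for the bounded continuous \emph{path} functional $\Phi$ and not merely for one-time test functions, which is the monotone-class extension pointed to in the remark (Theorem 3.1 in \cite{marguet2016}). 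Everything else is bookkeeping around the branching decomposition.
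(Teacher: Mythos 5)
Your argument is correct and is essentially the proof of this theorem (the present survey only states it and defers to \cite{marguet2016}): decompose the population into the $n$ i.i.d.\ subtrees of the founders, apply the strong law of large numbers to the empirical averages of $N_t^{(i)}$ and $S_t^{(i)}$, and identify the two limits via the pathwise Many-to-One formula, which gives weak convergence directly with no tightness step. The two caveats you flag --- integrability of $m(\cdot,0,t)$ against $\nu$ (supplied by Assumptions \ref{assu:debut1} and \ref{assu:debut2} through the Yule-type domination of the population size, plus a moment condition on $\nu$) and the need for the pathwise rather than single-time version of \eqref{mtomesurable} --- are precisely the points the reference handles, so nothing essential is missing.
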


\begin{remark}
	If we start with $n$ individuals with the same trait $x$, we obtain:
	\begin{align*}
		\mathbb{E}\left[F\left(X_{[0,t]}^{U(t),\nu_n}\right)\right]\underset{n\rightarrow +\infty} {\longrightarrow}\mathbb{E}_x\left[F\left(Y_{[0,t]}^{(t)}\right)\right].
	\end{align*}
	Therefore, the auxiliary process describes exactly the dynamic of the trait of a uniformly sampled individual in the large population limit, if all the starting individuals have the same trait. 
	If the initial individuals have different traits at the beginning, the large population approximation of a uniformly sampled individual is a linear combination of the auxiliary process. 
\end{remark}

\section{Averaging for some integrate-and-fire models}

\subsection{The model}

We examine the following slow-fast hybrid version of integrate-and-fire models \cite{Quad}, used in mathematical neuroscience. In such a setting, $X$ represents the membrane potential of a neural cell which is increasing until it reaches some threshold $c$, corresponding to the time where a nerve impulse is triggered, and then the potential is reset to some lower value.  More precisely, the process $(X(t),t\in[0,T])$, with $T$ some time horizon, obeys the following dynamic:
\begin{enumerate}
	\item{\bf Initial state:} At time $T^\ast_0=0$, the process starts at $X(T^\ast_0)=\xi_0$, a random variable with support included in $(m,c)$ where $\{c\}$ is considered as a boundary and $m<c$ is some real.
	\item{\bf First jumping time:}  Let $Y$ be a continuous time Markov chain valued in a countable space ${\mathcal{Y}}$. This chain starts at $Y(0)=\zeta$, a $\mathcal{Y}$-valued random variable. The first hitting time of the boundary occurs at time $T^\ast_1$ defined as
	$$
	T^\ast_1=\inf\left\{t>0~;~\xi_0+\int_{T^\ast_0}^{t} \alpha(Y(s))F(X(s)) ds=c\right\},
	$$
	where $\alpha$ is a positive measurable function such that $\alpha(\mathcal Y)$ is bounded from above and $F$ is a positive continuous function.
	\item {\bf Piecewise deterministic motion:} For $t\in[T^\ast_0,T^\ast_1)$, we set
	$$
	X(t)=\xi_0+\int_{T^\ast_0}^{t} \alpha(Y(s))F(X(s)) ds.
	$$
	The dynamic of $X$ is thus continuous here, and given by the differential equation:
	$$
	\frac{dX}{dt}(t)=\alpha(Y(t))F(X(t)),\quad X(0)=\xi_0.
	$$
	\item {\bf Jumping measure:} Then, at time $T^{\ast -}_1$, the process is constrained to stay inside $(m,c)$ by jumping according to the $Y$-dependent measure $\mu_{Y_{T^{\ast -}_1}}$ whose support is included in $(m,c)$:
	$$
	\forall A\text{ Borel subset of }(m,c),\quad \mathbb{P}(X(T^\ast_1)\in A)=\mu_{Y_{T^{\ast -}_1}}(A).
	$$
	\item {\bf And so on:} Go back to step 1 in replacing $T^\ast_0$ by $T^\ast_1$ and $\xi_0$ by $\xi_1=X(T^\ast_1)$.
\end{enumerate}

\begin{figure}
	\begin{center}
		\begin{tikzpicture}
		\draw[thick,->](0,-1)--(0,5) node[left]{$X(t)$};
		\draw[very thick](-0.2,4.5) node[left]{$c$}--(7,4.5);
		\draw[thick] (0,2) node[left]{$\xi_0$}--(1,3)--(3,4)--(3.5,4.5);
		\draw[dashed,->] (3.5,4.5)--(3.5,3) node[left]{$\xi_1$};
		\draw[thick] (3.5,3)--(4.5,4)--(5.5,4.5);
		\draw[dashed,->] (5.5,4.5)--(5.5,1.5) node[left]{$\xi_2$};
		\draw[thick,->](0,-1)--(0,0.75) node[left]{$Y(t)$};
		\draw[thick,->](-0.2,-0.75)--(7,-0.75) node[below]{$t$}; 
		\draw[thick] (-0.1,0) --(1,0);
		\draw (-0.1,0) node[left]{$1$}-- (0,0);
		\draw[thick] (-0.1,-0.5) node[left]{$\frac12$}--(0,-0.5);
		\draw[thick] (1,-0.5)--(3,-0.5);
		\draw[thick] (3,0)--(4.5,0);
		\draw[thick] (4.5,-0.5)--(5.5,-0.5);
		\draw[gray,dashed] (1,-0.75)--(1,3);
		\draw[gray,dashed] (3,-0.75)--(3,4);
		\draw[gray,dashed] (4.5,-0.75)--(4.5,4);
		\draw (3.5,-0.7)--(3.5,-0.8) node[below]{$T^\ast_1$};
		\draw (5.5,-0.7)--(5.5,-0.8) node[below]{$T^\ast_2$};
		\end{tikzpicture}
		\caption{A trajectory of $X$, in a piecewise linear case, $dX(t)/dt=Y(t)$, with $Y$ switching between $1/2$ and $1$.}\label{fig:ex}
	\end{center}
\end{figure}
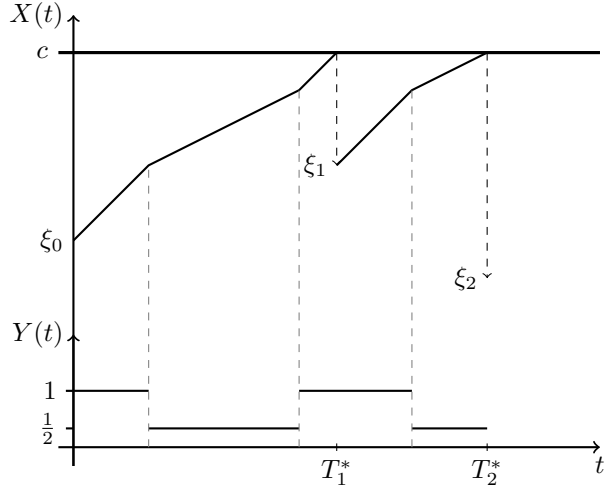

This is not hard to see that the process $(X,Y)$ is a piecewise deterministic Markov process. Moreover, due to the presence of a boundary $c$, this is also a constrained Markov process. Our aim is, in this quite simple situation, to describe the behavior of the process $X$ at the boundary $c$, when the dynamic of the underlying process $Y$ is infinitely accelerated.

\subsection{Acceleration and averaged process}

From now on, we assume that the underlying process of celerities, that is the continuous time Markov chain $Y$, has a fast dynamic, by introducing a (small) timescale parameter $\varepsilon $ such that
$$
\forall t\geq0,\quad Y_\varepsilon (t)=Y\left(t/\varepsilon \right).
$$
In the same time, to insure a limiting behavior, we assume that $Y$ is positive recurrent with intensity matrix $Q=(q_{zy})_{z,y\in\mathcal Y}$  and invariant probability measure $\pi$, considered as a row vector. For convenience, let us also define by $\alpha(V)$ the diagonal matrix such that $\text{diag}(\alpha(V))=\{\alpha(y)~;~y\in{\mathcal Y}\}$.\\
As $\varepsilon $ goes to zero, the process $Y_\varepsilon $ converges towards the stationary state associated to $Y$ in the sense that, by the ergodic theorem,
$$
\forall t\geq0,\forall y\in{\mathcal Y},\quad \lim_{\varepsilon \to0}\mathbb{P}(Y_\varepsilon (t)=y)=\pi(y).
$$
Therefore, as $\varepsilon $ goes to zero, the process $X_\varepsilon $, defined as $X$ by replacing $Y$ by $Y_\varepsilon $, should have its dynamic averaged with respect to the measure $\pi$. The behavior of the limiting process away from the boundary is indeed not hard to describe, see for example \cite{Kurtz92} and references therein. Here and later on, $\mathbb{D}\left(\left[0,\tau\right],\mathbb{R}\right)$ is the space of c\`adl\`ag functions from $[0,\tau]$ to $\mathbb{R}$, with $\tau$ a time horizon. For technical reasons, we assume that
\begin{itemize}
	\item $\frac1F$ is integrable over $(m,c)$,
	\item $\alpha({\mathcal Y})$ is bounded from above,
	\item $\mathbb{E}(\sup_{\varepsilon\in(0,1]}p^\ast_\varepsilon(T))$ is finite, where $p^\ast_\varepsilon(T)$ is the counting measure at the boundary for the process $X_\varepsilon$.
\end{itemize}
\begin{proposition}\label{prop;av:nb}
	Assume that $\xi_0$ is deterministic. Then, for any $\eta>0$, the process $X_\varepsilon $ converges in law towards a process $\bar X$ on $\mathbb{D}\left(\left[0,\frac{c-\xi_0}{\max\alpha(\mathcal Y)}-\eta\right],\mathbb{R}\right)$ defined as:
	\begin{align*}
		\bar X(t)&=\xi_0+\sum_{y\in\mathcal Y}\alpha(y)\pi(y)\int_0^tF(\bar{X}(s))d s.
	\end{align*}
\end{proposition}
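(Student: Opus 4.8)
\emph{Proof plan.} The strategy is to dissolve the nonlinearity $F$ by an explicit change of variables and then reduce the statement to a one--dimensional ergodic theorem for the fast chain $Y$. First I would observe that, as long as $X_\varepsilon$ has not touched the boundary $c$, no jump occurs and $X_\varepsilon$ is just the solution of the scalar ODE $\dot X_\varepsilon(t)=\alpha(Y_\varepsilon(t))\,F(X_\varepsilon(t))$, $X_\varepsilon(0)=\xi_0$. Introduce $H(x):=\int_{\xi_0}^{x}\frac{du}{F(u)}$; since $F$ is positive and continuous and $1/F\in L^1(m,c)$, $H$ is a strictly increasing $C^1$ bijection of $[\xi_0,c)$ onto $[0,H(c^-))$ with $H(c^-)<\infty$. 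Differentiating, $\frac{d}{dt}H(X_\varepsilon(t))=\alpha(Y_\varepsilon(t))$, so, while $X_\varepsilon<c$,
\[
X_\varepsilon(t)=H^{-1}\bigl(\phi_\varepsilon(t)\bigr),\qquad\phi_\varepsilon(t):=\int_0^t\alpha\bigl(Y_\varepsilon(s)\bigr)\,ds,
\]
and the identical computation gives $\bar X(t)=H^{-1}(\bar\alpha\,t)$ with $\bar\alpha:=\sum_{y\in\mathcal Y}\alpha(y)\pi(y)\le\max\alpha(\mathcal Y)$. (This representation incidentally settles the pathwise well-posedness of both equations although $F$ is merely continuous, and it is exactly what replaces a Gronwall estimate, which would require $F$ Lipschitz.) Since $0<\alpha\le\max\alpha(\mathcal Y)$ one has $0\le\phi_\varepsilon(t)\le\max\alpha(\mathcal Y)\,t$, and the choice of the horizon $\tau_\eta:=\frac{c-\xi_0}{\max\alpha(\mathcal Y)}-\eta$ forces $\max\alpha(\mathcal Y)\,\tau_\eta<H(c^-)$; so a short bootstrap on the first hitting time of $c$ shows that $X_\varepsilon(t)$ stays in the fixed compact interval $[\xi_0,H^{-1}(\max\alpha(\mathcal Y)\tau_\eta)]\subset[\xi_0,c)$, uniformly in $\varepsilon\in(0,1]$ and $t\in[0,\tau_\eta]$. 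In particular $X_\varepsilon$ undergoes no jump on $[0,\tau_\eta]$, the displayed representation holds on all of $[0,\tau_\eta]$, $\bar X$ lives in the same compact, and $H^{-1}$ is uniformly continuous on $[0,\max\alpha(\mathcal Y)\tau_\eta]$.

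Next I would prove that $\phi_\varepsilon\to\bar\alpha\,(\cdot)$ uniformly on $[0,\tau_\eta]$. Writing $\phi_\varepsilon(t)=\varepsilon\int_0^{t/\varepsilon}\alpha(Y(u))\,du$ and using that $Y$ is positive recurrent with invariant law $\pi$ and $\alpha$ bounded, Birkhoff's ergodic theorem gives $\frac1T\int_0^T\alpha(Y(u))\,du\to\bar\alpha$ almost surely as $T\to\infty$, whence $\phi_\varepsilon(t)\to\bar\alpha\,t$ a.s. for each fixed $t$ (the $Y_\varepsilon$ being realized on a common space through $Y_\varepsilon(\cdot)=Y(\cdot/\varepsilon)$, so this is genuine almost-sure convergence). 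Since each $t\mapsto\phi_\varepsilon(t)$ is nondecreasing ($\alpha>0$) and the limit $t\mapsto\bar\alpha t$ is continuous, this pointwise convergence upgrades automatically to uniform convergence on the compact $[0,\tau_\eta]$ (P\'olya's monotone-convergence argument). Composing with the uniformly continuous map $H^{-1}$ then gives $\sup_{t\in[0,\tau_\eta]}|X_\varepsilon(t)-\bar X(t)|\to 0$ almost surely, and almost-sure uniform convergence a fortiori yields the claimed convergence in law in $\mathbb{D}([0,\tau_\eta],\mathbb{R})$ (indeed in $C([0,\tau_\eta],\mathbb{R})$).

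The main obstacle is the first step: the $\varepsilon$-uniform separation of $X_\varepsilon$ from the boundary $c$ on $[0,\tau_\eta]$. This is exactly why the horizon must be (essentially) the worst-case, maximal-speed hitting time of $c$ minus the slack $\eta$, and it is the only place where the hypotheses ``$\alpha(\mathcal Y)$ bounded from above'' and ``$1/F$ integrable on $(m,c)$'' are really used; were $X_\varepsilon$ allowed to reach $c$, it would be reset by the measure $\mu$, the trajectory would lose its monotonicity, and the limit would no longer be the smooth flow $\bar X$. After this localization everything is soft --- the change of variables kills the nonlinearity and the averaging is a scalar additive ergodic theorem, made uniform in $t$ by monotonicity. (The third standing assumption, finiteness of $\mathbb{E}[\sup_{\varepsilon}p^\ast_\varepsilon(T)]$, plays no role here; it is only needed to control the behavior at and past the boundary, i.e. beyond the first reset.)
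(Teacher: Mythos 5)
Your route is genuinely different from the paper's, which does not prove this proposition internally but refers to Kurtz's averaging results for Markov processes away from boundaries. Your time change $H(x)=\int_{\xi_0}^x du/F(u)$ exploits the separated form $\alpha(y)F(x)$ of the vector field to reduce everything to the additive functional $\phi_\varepsilon(t)=\int_0^t\alpha(Y_\varepsilon(s))\,ds$, and then the ergodic theorem plus the P\'olya monotonicity upgrade gives almost-sure \emph{uniform} convergence, which is strictly stronger than the stated convergence in law and also settles well-posedness for merely continuous $F$. This is a clean, elementary and self-contained argument, and the identification of the localization away from $c$ as the only real issue is exactly right.

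However, the localization step as written contains a gap. You assert that the choice $\tau_\eta=\frac{c-\xi_0}{\max\alpha(\mathcal Y)}-\eta$ ``forces'' $\max\alpha(\mathcal Y)\,\tau_\eta<H(c^-)$. In your own variables this reads
\[
c-\xi_0-\eta\max\alpha(\mathcal Y)\;<\;\int_{\xi_0}^{c}\frac{du}{F(u)},
\]
which does not follow from the standing hypotheses ($F$ positive continuous, $1/F\in L^1(m,c)$): take $F\equiv 2$, for which the right-hand side equals $(c-\xi_0)/2$ and the inequality fails for small $\eta$. In that case $X_\varepsilon$ (and $\bar X$ itself) can reach the boundary strictly before $\tau_\eta$, a reset occurs, and the conclusion of your first paragraph breaks down. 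The inequality you need is automatic precisely when $F\le 1$ on $(\xi_0,c)$ --- which covers the paper's illustrative case $F\equiv 1$, where $\frac{c-\xi_0}{\max\alpha(\mathcal Y)}$ is indeed the worst-case hitting time --- but in general the pre-boundary horizon should be $\frac{1}{\max\alpha(\mathcal Y)}\int_{\xi_0}^{c}\frac{du}{F(u)}-\eta=\frac{H(c^-)}{\max\alpha(\mathcal Y)}-\eta$. So either add the hypothesis $F\le 1$ (or $\int_{\xi_0}^c du/F\ge c-\xi_0$) or restate the horizon in terms of $H(c^-)$; with that correction the rest of your argument goes through verbatim, since $\max\alpha(\mathcal Y)\,\tau_\eta<H(c^-)$ then keeps $\phi_\varepsilon([0,\tau_\eta])$ in a compact subinterval of $[0,H(c^-))$ on which $H^{-1}$ is uniformly continuous.
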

Proposition \ref{prop;av:nb} gives the behavior of the limiting process away from the boundary: celerities are averaged against the measure $\pi$. But what happens at boundary? This is what we want to characterize. The following result holds.
\begin{theorem}\label{thm:av}
	The process $X_\varepsilon$ converges in law in $\mathbb{D}([0,T],\mathbb{R})$ towards a process $\bar X$ such that for any sufficiently regular function $f:(m,c)\to \mathbb{R}$ the process
	\begin{align}\label{eq:pb:mart:av}
		f(\bar X(t))-f(\xi_0)-\sum_{y\in\mathcal Y}\alpha(y)\pi(y)\int_0^t f'(\bar X(s))F(\bar X(s)) d s-\int_0^t \int_{m}^c [f(u)-f(\bar X(s^-))]\bar\mu(d u) \bar p^\ast(d s)
	\end{align}
	for $t\in[0,T]$, defined a martingale, with $\bar p^\ast$ the counting measure at the boundary for $\bar X$. The averaging measure at the boundary $\bar\mu$ is defined by
	$$
	\bar\mu(d u)=\sum_{y\in{\mathcal Y}}\mu_{y}(d u)\pi^\ast(y)
	$$
	where, for $y\in\mathcal{Y}$, $\pi^*$ is given by 
		$$
		\pi^*(y)=\frac{\pi(y)\alpha(y)}{\sum_{y'\in\mathcal Y}\pi(y')\alpha(y')}.
		$$
\end{theorem}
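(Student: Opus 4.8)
The plan is to establish tightness of $\{X_\varepsilon\}_{\varepsilon\in(0,1]}$ in $\mathbb{D}([0,T],\mathbb{R})$, to identify every subsequential limit as a solution of the martingale problem \eqref{eq:pb:mart:av}, and finally to check that this martingale problem is well posed, so that the whole family converges. The bookkeeping becomes transparent after the change of variable $\Phi(x):=\int_m^x du/F(u)$, which by integrability of $1/F$ over $(m,c)$ is a $C^1$ increasing bijection of $(m,c)$ onto $(0,L)$, $L:=\Phi(c)<\infty$. Setting $\widetilde X_\varepsilon:=\Phi(X_\varepsilon)$, between two consecutive hits of the boundary one has $d\widetilde X_\varepsilon/dt=\alpha(Y_\varepsilon(t))$ — the continuous motion of $\widetilde X_\varepsilon$ is decoupled from its own position — the boundary becomes the single level $L$, and at a hit $\widetilde X_\varepsilon$ is reset to $\Phi(\xi_k)\in(0,L)$ with $\xi_k$ drawn from $\mu_{Y_\varepsilon(T^{\ast-}_k)}$. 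Writing $A_\varepsilon(t):=\int_0^t\alpha(Y_\varepsilon(s))\,ds$ and $R_\varepsilon$ for the increasing pure-jump process collecting the reset decrements, $\widetilde X_\varepsilon(t)=\Phi(\xi_0)+A_\varepsilon(t)-R_\varepsilon(t)$.

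For tightness, since $\alpha(\mathcal Y)$ is bounded above $A_\varepsilon$ is Lipschitz with a constant independent of $\varepsilon$, hence $C$-tight; $R_\varepsilon$ has at most $p^\ast_\varepsilon(T)$ jumps on $[0,T]$, each of size at most $L$, and $\mathbb{E}(\sup_\varepsilon p^\ast_\varepsilon(T))<\infty$ makes the number of jumps tight uniformly in $\varepsilon$, so the standard tightness criterion in $\mathbb{D}([0,T],\mathbb{R})$ (Aldous' criterion) gives tightness of $\{\widetilde X_\varepsilon\}$, hence of $\{X_\varepsilon=\Phi^{-1}(\widetilde X_\varepsilon)\}$. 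I would fix a convergent subsequence and, by the Skorokhod representation theorem, assume $X_\varepsilon\to\bar X$ (equivalently $\widetilde X_\varepsilon\to\widetilde X:=\Phi(\bar X)$) almost surely. For the drift, the flow-and-reset structure gives the pathwise identity $f(X_\varepsilon(t))=f(\xi_0)+\int_0^t f'(X_\varepsilon(s))\alpha(Y_\varepsilon(s))F(X_\varepsilon(s))\,ds+\sum_{k:\,T^\ast_{\varepsilon,k}\le t}[f(\xi_{\varepsilon,k})-f(c)]$, with no continuous martingale part. Positive recurrence of $Y$ and the ergodic theorem give $A_\varepsilon(t)=\varepsilon\int_0^{t/\varepsilon}\alpha(Y(u))\,du\to t\,\pi(\alpha)$, $\pi(\alpha):=\sum_y\alpha(y)\pi(y)\in(0,\infty)$, and a blocking argument (partition $[0,t]$ into sub-intervals of length $\delta$ on which $X_\varepsilon$ oscillates little by the Lipschitz bound on $\widetilde X_\varepsilon$ while $Y_\varepsilon$ equilibrates over a fast-time span $\delta/\varepsilon\to\infty$, then let $\delta\to0$) upgrades this to $\int_0^t g(X_\varepsilon(s))\alpha(Y_\varepsilon(s))\,ds\to\pi(\alpha)\int_0^t g(\bar X(s))\,ds$ for bounded continuous $g$; taking $g=f'F$ produces the drift term of \eqref{eq:pb:mart:av}, and this also reproves Proposition \ref{prop;av:nb} away from the boundary. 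A perturbed-test-function argument on the interior generator $\varepsilon^{-1}Q+\alpha(y)F(x)\partial_x$ gives the same interior limit.

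The heart of the matter is the boundary term, where the weight $\pi^\ast$ — rather than $\pi$ — appears because the state of $Y_\varepsilon$ seen at a boundary hit is size-biased by the speed $\alpha$ at which $\widetilde X_\varepsilon$ approaches $c$. I would make this visible by reparametrising time by the additive functional: with $\tau_\varepsilon(a):=\inf\{u:A_\varepsilon(u)\ge a\}$ and $\widehat Y_\varepsilon(a):=Y_\varepsilon(\tau_\varepsilon(a))$, in the variable $a$ the continuous motion of $\widetilde X_\varepsilon$ is at unit speed with resets, so boundary hits occur at $a$-locations determined only by the reset sizes, while $\widehat Y_\varepsilon$ is the $\alpha^{-1}$-time-change of the fast chain, a positive recurrent jump process with generator $\varepsilon^{-1}\mathrm{diag}(\alpha)^{-1}Q$, whose invariant law $\widehat\pi$ satisfies $\widehat\pi\,\mathrm{diag}(\alpha)^{-1}Q=0$, i.e. $\widehat\pi(y)\propto\pi(y)\alpha(y)$, which is $\pi^\ast$. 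Since a chain run at rate $\varepsilon^{-1}$ evaluated at a macroscopic location converges in law to its invariant measure, $\widehat Y_\varepsilon$ at the $k$-th hit converges in law to $\pi^\ast$, so the reset $\xi_{\varepsilon,k}$ — drawn, given the hit, from $\mu_{Y_\varepsilon(T^{\ast-}_{\varepsilon,k})}$ — converges in law to $\int_{\mathcal Y}\mu_y(\cdot)\pi^\ast(y)=\bar\mu$; the $\Theta(1)$ spacing of successive hits in real time (hence $\Theta(\varepsilon^{-1})$ for the fast chain) also makes the successive resets asymptotically independent of one another and of the drift. Finally, transversality of the crossing (near $c$, $\widetilde X$ increases at rate $\pi(\alpha)>0$) and the uniform integrability of $p^\ast_\varepsilon(T)$ give $p^\ast_\varepsilon\to\bar p^\ast$ with no loss of mass.

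It then remains to compensate and pass to the limit: write $\sum_{k:\,T^\ast_{\varepsilon,k}\le t}[f(\xi_{\varepsilon,k})-f(c)]=\sum_{k:\,T^\ast_{\varepsilon,k}\le t}[\mu_{Y_\varepsilon(T^{\ast-}_{\varepsilon,k})}(f)-f(c)]+M_\varepsilon(t)$ with $M_\varepsilon$ a martingale whose jumps are bounded (by the oscillation of $f$ over the supports involved) and whose number of jumps is uniformly integrable; by the previous paragraph the compensator tends to $\int_0^t\int_m^c[f(u)-f(c)]\bar\mu(du)\,\bar p^\ast(ds)$ and, $\{M_\varepsilon(t)\}$ being uniformly integrable, its limit is a martingale. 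Since $\bar X(s^-)=c$ at each hit, $f(u)-f(c)=f(u)-f(\bar X(s^-))$, which is exactly \eqref{eq:pb:mart:av}. The martingale problem is well posed — the drift is pinned by Proposition \ref{prop;av:nb} and the boundary dynamics of $\bar X$ reduce to a deterministic flow with i.i.d. $\bar\mu$-resets at $c$ — so subsequential convergence upgrades to convergence of the whole family. I expect the main obstacle to be precisely the size-biasing step: rigorously showing that the fast environment seen at the strongly $Y$-dependent hitting times equilibrates to $\pi^\ast$ and that the resets decouple. The time-change by $A_\varepsilon$, which turns each hit into an event at a macroscopic intrinsic time of $\widehat Y_\varepsilon$, is the device that defuses the correlation, but controlling it carefully — especially if $\inf_{\mathcal Y}\alpha=0$, where $\tau_\varepsilon$ is only H\"older rather than Lipschitz — is the delicate part.
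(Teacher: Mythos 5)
Your proposal is correct and follows exactly the route the paper indicates for Theorem \ref{thm:av} --- the Prohorov program (tightness of $\{X_\varepsilon\}$, then identification of the limit martingale problem), with the technical details deferred to \cite{AG}. In particular, your time change by the additive functional $A_\varepsilon$, which identifies the environment seen at boundary hits as the chain with generator $\varepsilon^{-1}\mathrm{diag}(\alpha)^{-1}Q$ and invariant law proportional to $\pi(y)\alpha(y)$, is precisely the rigorous counterpart of the paper's heuristic that faster celerities reach the boundary proportionally more often, which is what produces $\pi^\ast$ rather than $\pi$ in $\bar\mu$.
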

We can read in (\ref{eq:pb:mart:av}) the behavior of the averaged process $\bar X$. Away from the boundary, the dynamic of $\bar X$ is the one of the averaged flow, as indeed described in Proposition \ref{prop;av:nb}. At the boundary, the averaged jump measure is not given by the averaged version of the jumping measure, that is
$$
\bar\mu(d u)\neq \sum_{y\in{\mathcal Y}}\mu_{y}(d u)\pi(y),
$$
but the values of the celerity at the boundary are taken into account through the presence of $\alpha$. There is a balance between the celerity intensities and the invariant probabilities. For example, consider a piecewise linear motion whose speed switches at rate $1/\varepsilon$ between $1$ and $2$. When, $\varepsilon$ goes to zero, the process will spend as much time increasing with celerities $1$ and $2$, but when increasing with celerity $2$, the process obviously increases twice more than with celerity one in a same time window, and then approaches the boundary twice more rapidly, and therefore will have in fact twice more chance to hit the boundary with this celerity.\\
Note that because of the separation of variables in the form of the flow, the value of the process $\bar X$ at the boundary does not appear in the expression of $\pi^\ast$, as it could be the case in more general situations. Theorem such as \ref{thm:av} can be derived using the Prohorov program : prove the tightness of the family $\{X_\varepsilon,\varepsilon\in (0,1)\}$ and then identify the limit. Tightness for constrained Markov processes, as the process described in the present section, has an interesting and rich literature, see \cite{Kurtz90} and references therein. We refer to \cite{AG} for more details about the proof of Theorem \ref{thm:av}.

\section{Stochastic models of protein production with cell division and gene replication}


The production of proteins is the main mechanism of the prokaryotic cells (like
bacteria). These functional molecules represent about $3.6\times10^{6}$
molecules of $2000$ different types and compose about half of the dry mass
of the cell \cite{bremer_modulation_1996} and this pool needs to
be duplicated in the cell cycle time. In total, it is estimated that
$67\%$ of the energy of the cell is dedicated to the protein production
\cite{russell_energetics_1995}. The protein production is a two steps
process by which the genetic information is first transcribed into an
intermediate molecule, the messenger-RNA (mRNA), and then translated
into a protein. Experimental studies have shown
that this production is subject to a large variability \cite{elowitz_stochastic_2002,ozbudak_regulation_2002,taniguchi_quantifying_2010}.
This heterogeneity can either be explained by the stochastic nature
of the production mechanism (both transcription and translation are due to
random encounter between molecules) or the effect of cell scale phenomenon
like the DNA replication, cell division, or fluctuation in the resources.

In particular, the extensive experimental study of Taniguchi~et~al.~(2010)
\cite{taniguchi_quantifying_2010} measured the variability of protein
concentration for more than $1000$ different proteins in \emph{E.
	coli}. For each of them, the authors have measured the average protein
concentration, the average mRNA concentration and their lifetime.
They interpret different sources of proteins, either from the protein
production mechanism itself (i.e. the transcription of mRNAs and the
translation of proteins) or from external factors, like the gene replication,
division or the sharing of common resources in the production.

One can interpret the experimental results in the light of classical
stochastic models of protein production \cite{berg_model_1978,rigney_stochastic_1977,paulsson_models_2005}
that predict the variability of protein production. But these classical
models do not consider important phenomenon that may have an impact
on the production: they usually do not consider the replication of
the gene at some point in the cell cycle and also do not take into
account the random assignment of each protein in either of the two
daughter cells at division. Moreover, they only consider the number
of mRNAs and proteins without any explicit notion of volume, and therefore
the notion of concentration is unclear in this case. Since the experimental
study only considers concentrations, it would be difficult to have
quantitative comparisons between the variances predicted by these
models and the ones experimentally measured.

We therefore propose here a more realistic model of protein production
that takes into account all the basic features that can be expected
for the production of a type of protein inside a cell cycle: the transcription,
the translation, the gene replication and the division. We also explicitly
represent the number of mRNAs and proteins in the growing volume of
the bacteria, so that we can consider the concentration of each of
these entities in the cell. We will then estimate the theoretical contribution
of these features to the protein variance through mathematical analysis. We
will also make a biological interpretation of these results with respect to
experimental measures.

\subsection{Model and Results}

Our model presented here is adapted from a simple gene constitutive model that
does not consider any gene regulation (it is a particular case of the model
presented in \cite{paulsson_models_2005}). Like this classical model, our model
is gene centered (it represents the production of only one protein), and has two
stages (transcription and translation) where both the number of mRNAs, $M$, and
proteins, $P$, are explicitly represented. Each event (creation or degradation
of an mRNA, creation of a protein, etc.) is supposed to occur at random times.
The time intervals are considered as exponentially distributed. In addition to
this classical framework, we introduce a notion of cell cycle represented by
cell growth, division and gene replication. Periodically two events occurs: the
replication at a deterministic time $\tau_{R}$ in the cell cycle that doubles the
mRNA production rate and the division after a time $\tau_{D}$ starting a new
cell cycle. We can then consider the evolution
of one type of mRNA and protein through a cell lineage (see Figure~\ref{fig:Cell-cycle}).

\begin{figure}
	\begin{centering}
		\subfloat[\label{fig:Cell-cycle}]{\begin{tikzpicture}
			\node[anchor=south west,inner sep=0] (im) at (0,0) {\includegraphics[height=4cm]{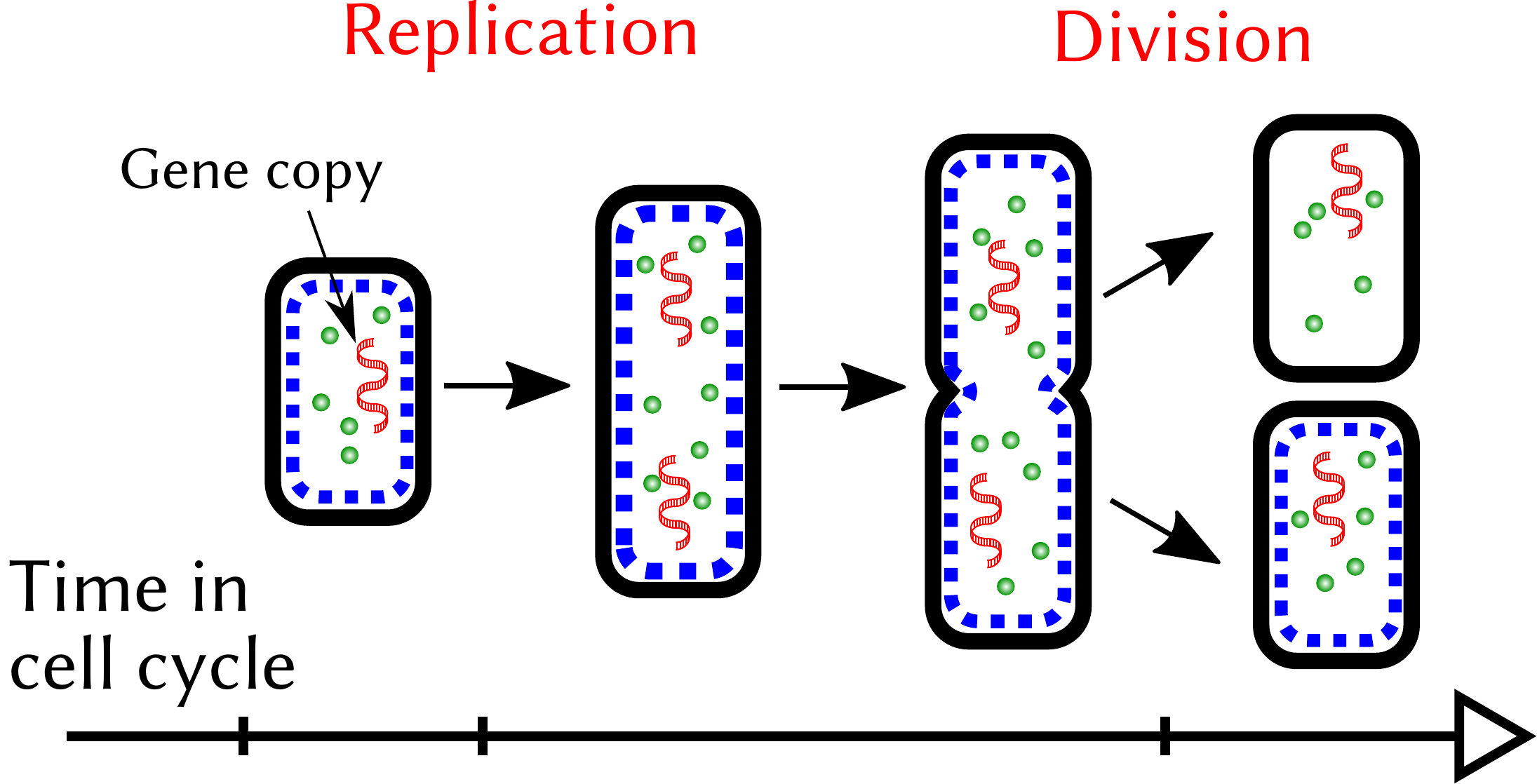}};
			\draw (im.south west)+(1.2,-0.3) node[above]{$0$};
			\draw (im.south west)+(2.5,-0.3) node[above]{$\tau_R$};
			\draw (im.south west)+(6.1,-0.3) node[above]{$\tau_D$};
			\end{tikzpicture}}~~\subfloat[\label{fig:model-with-replication}]{\centering{}\begin{tikzpicture}
			\matrix [column sep=17mm, row sep=17mm,ampersand replacement=\&] {
				\node (g) [draw=none,fill=none] {}; \&
				\node (m) [draw, shape=rectangle] {$M$}; \&
				\node (p) [draw, shape=rectangle] {$P$}; \\
				\&
				\node (em) [draw=none,fill=none] {$\emptyset$}; \&
				\node (ep) [draw=none,fill=none,color=red,align=center] {
					Periodic divisions \\  every $\tau_{D}$}; \\
			};
			\draw[-> ] (g) -- (m) node[pos=0.1,above] {$\lambda_1\cdot (1+{\color{red} \ind_{s>\tau_R}})$};
			\draw[-> ] (m) -- (em) node[right,midway] {$\sigma_1M$};
			\draw[-> ] (m) -- (p) node[midway,above] {$\lambda_2M$};
			\draw[-> ,red] (m) -- (ep) node[right,midway] {${\mathcal B}(M,1/2)$};
			\draw[-> ,red] (p) -- (ep) node[right,midway] {${\mathcal B}(P,1/2)$};
			
			\draw (m.north)+(0,0.5) node[above]{mRNAs};
			\draw (p.north)+(0,0.5) node[above]{Proteins};
			
			\draw (m.west) node[below left,inner sep=2] {\tiny +1};
			\draw (m.south) node[below left,inner sep=2] {\tiny -1};
			\draw (p.west) node[below left,inner sep=2] {\tiny +1};
			
			\end{tikzpicture}}
		\par\end{centering}
	\caption{Model with gene replication and division. \textbf{(A)} The concept
		of cell cycle in the model. \textbf{(B)} Model of production for one
		protein.}
\end{figure}

Overall, the model represents five aspects that intervene in protein
production (see Figure~\ref{fig:model-with-replication}):
\begin{description}
	\item [{mRNAs}] Messenger-RNAs are transcribed at rate $\lambda_{1}$ before
	the replication and at rate $2\lambda_{1}$ after gene replication.
	When transcription happens, the number $M$ of mRNAs  increases by
	$1$. As in the previous model, each mRNA has a lifetime of rate $\sigma_{1}$
	(so the global mRNA degradation rate is $\sigma_{1}M$).
	\item [{Proteins}] Each mRNA can be translated into a protein at rate $\lambda_{2}$
	(so the global protein production rate is $\lambda_{2}M$). The number
	of proteins $P$ is increased by $1$. As the protein lifetime is
	usually of the order of several cell cycles, we consider that the
	proteins do not degrade.
	\item [{Gene~replication}] After a deterministic time $\tau_{R}$ after
	each division (with $\tau_{R}<\tau_{D}$), the gene replication occurs.
	The gene responsible for the mRNA transcription is replicated, hence
	doubling the mRNA transcription rate until next division.
	\item [{Division}] Divisions occur periodically after deterministic time
	interval $\tau_{D}$. After each division, we only follow one of the two
	daughter cells (see Figure~\ref{fig:Cell-cycle}) so that each mRNA and each
	protein have a probability $1/2$ to be in this next considered cell. The
	result is a \emph{binomial sampling}: for instance, with $M$ mRNAs just
	before division, the number of mRNAs after division follows a binomial
	distribution ${\mathcal B}\left(n,p\right)$ with parameters $n=M$ and
	$p=1/2$. Moreover, as there is only one copy of the gene in the newborn
	cell, the mRNA transcription rate is anew set to $\lambda_{1}$ until
	the next gene replication.
	\item [{Volume~growth}] The volume $V(s)$ considers the entire volume
	of the cell at any moment $s$ of the cell cycle and it increases
	as the cell grows. One considers the volume growth of the cell as
	deterministic. In real life experiments, a bacteria volume globally
	grows exponentially (see \cite{wang_robust_2010}) and approximately
	doubles its volume at the time of division $\tau_{D}$. As a consequence,
	in our model, for a time $s$ in the cell cycle, then the volume grows
	as
	\[
	V(s)=V_{0}2^{s/\tau_{D}}
	\]
	with $V_{0}$ the typical size of a cell at birth.
\end{description}
From this, if $M_{s}$ and $P_{s}$ denote respectively the number
of mRNAs and proteins at time $s$, the concentrations are
\[
\frac{M_{s}}{V(s)}\mbox{ and }\frac{P_{s}}{V(s)}\mbox{.}
\]

Our main results are the analytical expressions for the distribution of the number of mRNAs and the first moments of the number of proteins
at any time $s$ of the cell cycle, when the system is at equilibrium
(ie after many cell divisions).
\begin{proposition}
	\label{thm:xs_rep}At equilibrium, the mRNA number $M_{s}$ at time
	$s$ in the cell cycle follows a Poisson distribution of parameter
	\[
	x_{s}=\frac{\lambda_{1}}{\sigma_{1}}\left[1-\frac{e^{-(s+\tau_{D}-\tau_{R})\sigma_{1}}}{2-e^{-\tau_{D}\sigma_{1}}}+\ind_{s\geq\tau_{R}}\left(1-e^{-(s-\tau_{R})\sigma_{1}}\right)\right]\mbox{.}
	\]
\end{proposition}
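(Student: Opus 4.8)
The plan is to exploit that the number of mRNAs evolves autonomously --- its law does not depend on the protein number $P$ nor on the volume --- and that on each cell cycle it is a time-inhomogeneous immigration--death process whose one-dimensional marginals stay Poisson, with the divisions acting by a Poisson-preserving binomial thinning; the statement then reduces to integrating a scalar linear ODE and identifying the periodic regime of the resulting cycle-to-cycle recursion.

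First I would record the dynamics of $M$ alone. On a cell cycle $s\in[0,\tau_D)$, mRNAs appear at the piecewise-constant rate $a(s)=\lambda_1\bigl(1+\ind_{s\geq\tau_R}\bigr)$ and each present molecule is removed independently at rate $\sigma_1$, so $M$ is a time-inhomogeneous $M/M/\infty$ queue, and at the division time $\tau_D$ the current value is replaced by a $\mathcal{B}(M,1/2)$ variable. The two elementary facts I would use are: (i) if the cycle starts with $M_0\sim\mathrm{Poisson}(x_0)$, then $M_s\sim\mathrm{Poisson}(x(s))$ for every $s\in[0,\tau_D)$, where $x(\cdot)$ solves
\[
\dot x(s)=a(s)-\sigma_1 x(s),\qquad x(0)=x_0;
\]
and (ii) $\tfrac12$-binomial thinning maps $\mathrm{Poisson}(x)$ to $\mathrm{Poisson}(x/2)$. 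Fact (ii) is the standard Poisson thinning identity, immediate on probability generating functions. For (i) the quickest argument is to write $G(s,z)=\E[z^{M_s}]$, which solves the first-order linear equation $\partial_s G=(z-1)\bigl(a(s)G-\sigma_1\partial_z G\bigr)$, and to check that the Poisson ansatz $G(s,z)=\exp(x(s)(z-1))$ solves it precisely when $x(\cdot)$ satisfies the displayed ODE; by the method of characteristics this is the unique solution with the given initial data. Equivalently, one argues pathwise: the molecules alive at time $s$ are obtained by thinning the initial Poisson batch and the Poisson stream of transcriptions by their independent survival probabilities $e^{-\sigma_1(\cdot)}$, and a superposition of independent Poisson variables is Poisson.

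Granting (i)--(ii), the counts at the successive cycle starts $(M_{k\tau_D})_{k\ge0}$ form an irreducible aperiodic $\mathbb{N}$-valued Markov chain, whose transition, read on Poisson parameters, is the affine map $x_0\mapsto\tfrac12\,x(\tau_D)=\tfrac12\bigl(e^{-\sigma_1\tau_D}x_0+\mu\bigr)$ with $\mu=\int_0^{\tau_D}e^{-\sigma_1(\tau_D-s)}a(s)\,ds$. A direct generating-function computation then shows that $\mathrm{Poisson}(x^\star)$, with $x^\star=\mu/(2-e^{-\sigma_1\tau_D})$, is a stationary law for this chain; by irreducibility it is the unique one and the chain converges to it from any initial condition --- in particular from the deterministic value prescribed in the model. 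Hence at equilibrium the mRNA count at the beginning of a cycle is $\mathrm{Poisson}(x^\star)$, and by (i) $M_s\sim\mathrm{Poisson}(x(s))$ with $x(\cdot)$ the ODE solution started at $x(0)=x^\star$. It then remains to integrate the ODE piecewise --- $x(s)=\tfrac{\lambda_1}{\sigma_1}+\bigl(x^\star-\tfrac{\lambda_1}{\sigma_1}\bigr)e^{-\sigma_1 s}$ on $[0,\tau_R)$ and $x(s)=\tfrac{2\lambda_1}{\sigma_1}+\bigl(x(\tau_R)-\tfrac{2\lambda_1}{\sigma_1}\bigr)e^{-\sigma_1(s-\tau_R)}$ on $[\tau_R,\tau_D)$ --- and to evaluate $\mu=\tfrac{\lambda_1}{\sigma_1}\bigl(2-e^{-\sigma_1(\tau_D-\tau_R)}-e^{-\sigma_1\tau_D}\bigr)$, so that $x^\star=\tfrac{\lambda_1}{\sigma_1}\cdot\tfrac{2-e^{-\sigma_1(\tau_D-\tau_R)}-e^{-\sigma_1\tau_D}}{2-e^{-\sigma_1\tau_D}}$; substituting this back and simplifying yields the announced expression for $x_s$, the indicator term $\ind_{s\geq\tau_R}$ arising exactly because the transcription rate doubles past $\tau_R$.

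The only genuine content is in facts (i) and (ii); everything else is bookkeeping. The step I expect to require the most care is the justification of (i) --- that the Poisson form is propagated exactly through the time-inhomogeneous immigration--death dynamics and is the unique solution of the generating-function equation --- together with checking that the cycle-to-cycle chain admits $\mathrm{Poisson}(x^\star)$ as its (necessarily unique) stationary law, which is what legitimates the phrase ``at equilibrium''. The concluding algebraic simplification is routine but must be done carefully to land on the stated formula with its $\ind_{s\geq\tau_R}$ correction.
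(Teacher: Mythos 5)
Your proposal is correct and follows essentially the same route as the paper: the pathwise argument you give for fact (i) — survivors of the initial Poisson batch plus the thinned Poisson stream of transcriptions, superposed — is exactly the marked Poisson point process framework the paper invokes to get the law of $M_s$ conditionally on $M_0$, and your fixed-point equation $x^\star=\tfrac12\bigl(e^{-\sigma_1\tau_D}x^\star+\mu\bigr)$ is the paper's equilibrium condition that the law of $M$ at time $0$ coincide with the law just after the next division. Your explicit values $\mu=\tfrac{\lambda_1}{\sigma_1}\bigl(2-e^{-\sigma_1\tau_D}-e^{-\sigma_1(\tau_D-\tau_R)}\bigr)$ and $x^\star=\mu/(2-e^{-\sigma_1\tau_D})$, propagated through the piecewise ODE, do reproduce the stated $x_s$ on both $[0,\tau_R)$ and $[\tau_R,\tau_D)$.
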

The proof of this proposition uses the framework of Marked Point Poisson
Processes to determine the mRNA distribution at any time of the cell
cycle knowing the random variable $M_{0}$, the number of mRNA at
birth. Then, the distribution of $M_{0}$ can be determined as the
equilibrium stipulates that the distribution of $M$ is the same at
time $0$ and just after the next division. The proof will be given
in \cite{dessalles_stochastic_2017} (in preparation).

\begin{proposition}
	\label{prop:EP_VarP}At any time $s\in[0,\tau_{R}[$ before replication,
	the mean and the variance of $P_{s}$ are given as a function of the
	moments of $\left(P_{0},M_{0}\right)$ by:
	\begin{eqnarray*}
		\Esp{P_{s}} & = & \Esp{P_{0}}+\lambda_{2}\frac{\lambda_{1}}{\sigma_{1}}s+\left(x_{0}-\frac{\lambda_{1}}{\sigma_{1}}\right)\frac{1-e^{-\sigma_{1}s}}{\sigma_{1}},\\
		\V{P_{s}} & = & \V{P_{0}}+2\lambda_{2}\frac{1-e^{-\sigma_{1}s}}{\sigma_{1}}\Cov{P_{0},M_{0}}\\
		&  & \left(\lambda_{2}\frac{1-e^{-\sigma_{1}s}}{\sigma_{1}}\right)^{2}x_{0}+x_{0}\frac{\lambda_{2}}{\sigma_{1}}\left(1-e^{-\sigma_{1}s}+\frac{\lambda_{2}}{\sigma_{1}}\left[1-e^{-\sigma_{1}s}\left(e^{-\sigma_{1}s}+2s\sigma_{1}\right)\right]\right)+\\
		&  & +\frac{\lambda_{1}\lambda_{2}}{\sigma_{1}^{2}}\left[s\sigma_{1}-1+e^{-\sigma_{1}s}+2\frac{\lambda_{2}}{\sigma_{1}}\left(\sigma_{1}s\left(1+e^{-\sigma_{1}s}\right)-2\left(1-e^{-\sigma_{1}s}\right)\right)\right].
	\end{eqnarray*}
	with $x_{0}$ defined in Proposition~\ref{thm:xs_rep}.
\end{proposition}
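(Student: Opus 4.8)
The plan is to exploit the fact that on the interval $[0,\tau_R[$ there is neither a division (the next one occurs at $\tau_D>\tau_R$) nor a gene replication (which happens at $\tau_R$), so the pair $(M_s,P_s)_{s\in[0,\tau_R[}$ is simply a time-homogeneous Markov jump process with the three transitions $M\mapsto M+1$ at rate $\lambda_1$, $M\mapsto M-1$ at rate $\sigma_1 M$, and $P\mapsto P+1$ at rate $\lambda_2 M$, i.e. with generator
\[
\mathcal{L}g(m,p)=\lambda_1\big(g(m+1,p)-g(m,p)\big)+\sigma_1 m\big(g(m-1,p)-g(m,p)\big)+\lambda_2 m\big(g(m,p+1)-g(m,p)\big).
\]
Because every jump rate is affine in $(m,p)$ (the reaction network is monomolecular), applying $\mathcal{L}$ to the monomials $m,p,m^2,mp,p^2$ and using $\frac{d}{ds}\Esp{g(M_s,P_s)}=\Esp{\mathcal{L}g(M_s,P_s)}$ yields a \emph{closed, triangular} linear system of ODEs for the first and second moments. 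Integrability of all these moments is guaranteed by Proposition~\ref{thm:xs_rep}, which moreover tells us that $M_s\sim\mathrm{Poisson}(x_s)$, so that $\V{M_s}=\Esp{M_s}=x_s$; this will let us express everything in terms of $x_0$ rather than of $\Esp{M_0}$ and $\V{M_0}$ separately.

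First I would treat the means. Applying $\mathcal{L}$ to $g(m,p)=m$ gives $\tfrac{d}{ds}\Esp{M_s}=\lambda_1-\sigma_1\Esp{M_s}$, whose solution with $\Esp{M_0}=x_0$ is exactly $x_s$ of Proposition~\ref{thm:xs_rep} restricted to $[0,\tau_R[$. Applying $\mathcal{L}$ to $g(m,p)=p$ gives $\tfrac{d}{ds}\Esp{P_s}=\lambda_2\Esp{M_s}=\lambda_2 x_s$, so $\Esp{P_s}=\Esp{P_0}+\lambda_2\int_0^s x_u\,du$, which after computing the primitive is the displayed expression for $\Esp{P_s}$.

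Next, for the variance I would pass directly to centred second moments. Subtracting from $\tfrac{d}{ds}\Esp{M_sP_s}$ and $\tfrac{d}{ds}\Esp{P_s^2}$ the corresponding derivatives of $\Esp{M_s}\Esp{P_s}$ and $\Esp{P_s}^2$ gives
\begin{align*}
\frac{d}{ds}\Cov{P_s,M_s}&=-\sigma_1\,\Cov{P_s,M_s}+\lambda_2\,\V{M_s}=-\sigma_1\,\Cov{P_s,M_s}+\lambda_2 x_s,\\
\frac{d}{ds}\V{P_s}&=2\lambda_2\,\Cov{P_s,M_s}+\lambda_2\,\Esp{M_s}=2\lambda_2\,\Cov{P_s,M_s}+\lambda_2 x_s.
\end{align*}
The first equation solves by variation of constants to $\Cov{P_s,M_s}=e^{-\sigma_1 s}\Cov{P_0,M_0}+\lambda_2\int_0^s e^{-\sigma_1(s-u)}x_u\,du$; substituting this into the second equation and integrating once more on $[0,s]$ gives $\V{P_s}$ as a function of $\V{P_0}$, $\Cov{P_0,M_0}$ and $x_0=\Esp{M_0}=\V{M_0}$. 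Collecting the resulting elementary terms ($1-e^{-\sigma_1 s}$, $s e^{-\sigma_1 s}$, $\int_0^s u e^{-\sigma_1 u}\,du$, etc.) into the compact form of the statement finishes the proof.

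The only genuine obstacle is bookkeeping: the nested integrals $\int_0^s e^{-\sigma_1(s-u)}x_u\,du$ and $\int_0^s u e^{-\sigma_1 u}\,du$ produce a fair number of terms (including apparent $e^{-2\sigma_1 s}$ contributions that cancel once everything is combined), and rearranging them to match the displayed closed form requires care. Conceptually nothing is delicate beyond two observations: the monomolecular structure closes the moment hierarchy exactly, and the equilibrium hypothesis is used only to fix the law of $M_0$ — the moments of $P_0$ are carried through as free parameters, being themselves pinned down by a separate fixed-point argument over a full cell cycle (carried out in \cite{dessalles_stochastic_2017}).
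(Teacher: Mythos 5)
Your approach is correct and is the natural one for this monomolecular reaction network: the paper itself gives no proof (it is deferred to \cite{dessalles_stochastic_2017}), but your generator/moment-ODE argument closes exactly as you describe, and carrying out the integrations of $\int_0^s e^{-\sigma_1(s-u)}x_u\,du$ and $\int_0^s u e^{-\sigma_1 u}\,du$ with $\V{M_0}=\Esp{M_0}=x_0$ (from the Poisson law of Proposition~\ref{thm:xs_rep}) reproduces the displayed variance formula term by term, including the grouping into the $x_0$-part and the $\lambda_1\lambda_2/\sigma_1^2$-part. One byproduct of your derivation worth recording: integrating $\frac{d}{ds}\Esp{P_s}=\lambda_2 x_s$ yields $\Esp{P_s}=\Esp{P_0}+\lambda_2\frac{\lambda_1}{\sigma_1}s+\lambda_2\left(x_0-\frac{\lambda_1}{\sigma_1}\right)\frac{1-e^{-\sigma_1 s}}{\sigma_1}$, so the last term of the stated mean is missing a factor $\lambda_2$ (a typo in the statement, as a small-$s$ expansion confirms, and as is consistent with the $x_0\lambda_2(1-e^{-\sigma_1 s})/\sigma_1$ term that does appear correctly in the variance).
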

Moreover, we can show a similar result for $s\in[\tau_{R},\tau_{D}[$,
a time after gene replication. Then, at equilibrium, we have that the
distribution of the couple $\left(M,P\right)$ after division is the
same as at the beginning of the cell cycle. This allows us to determine
the explicit values for $\Esp{P_{0}}$, $\V{P_{0}}$ and $\Cov{P_{0},M_{0}}$.
These complementary results and the proof of Proposition~\ref{prop:EP_VarP}
will be available in \cite{dessalles_stochastic_2017} (in preparation).

\subsection{Discussion}

In order to have realistic values for the parameters $\lambda_{1}$,
$\sigma_{1}$, $\lambda_{2}$ and $\tau_{R}$, we fix them to correspond to the average production of each protein measured
in Taniguchi~et~al.~(2010) \cite{taniguchi_quantifying_2010}.
It gives groups of parameters that represent a large spectrum of different
genes. Then, using Propositions \ref{thm:xs_rep} and \ref{prop:EP_VarP},
as well as the additional results, we can compute the concentration for
every gene at any time of the cell cycle.

\begin{figure}[h]
	
	\begin{centering}
		\includegraphics[width=6cm]{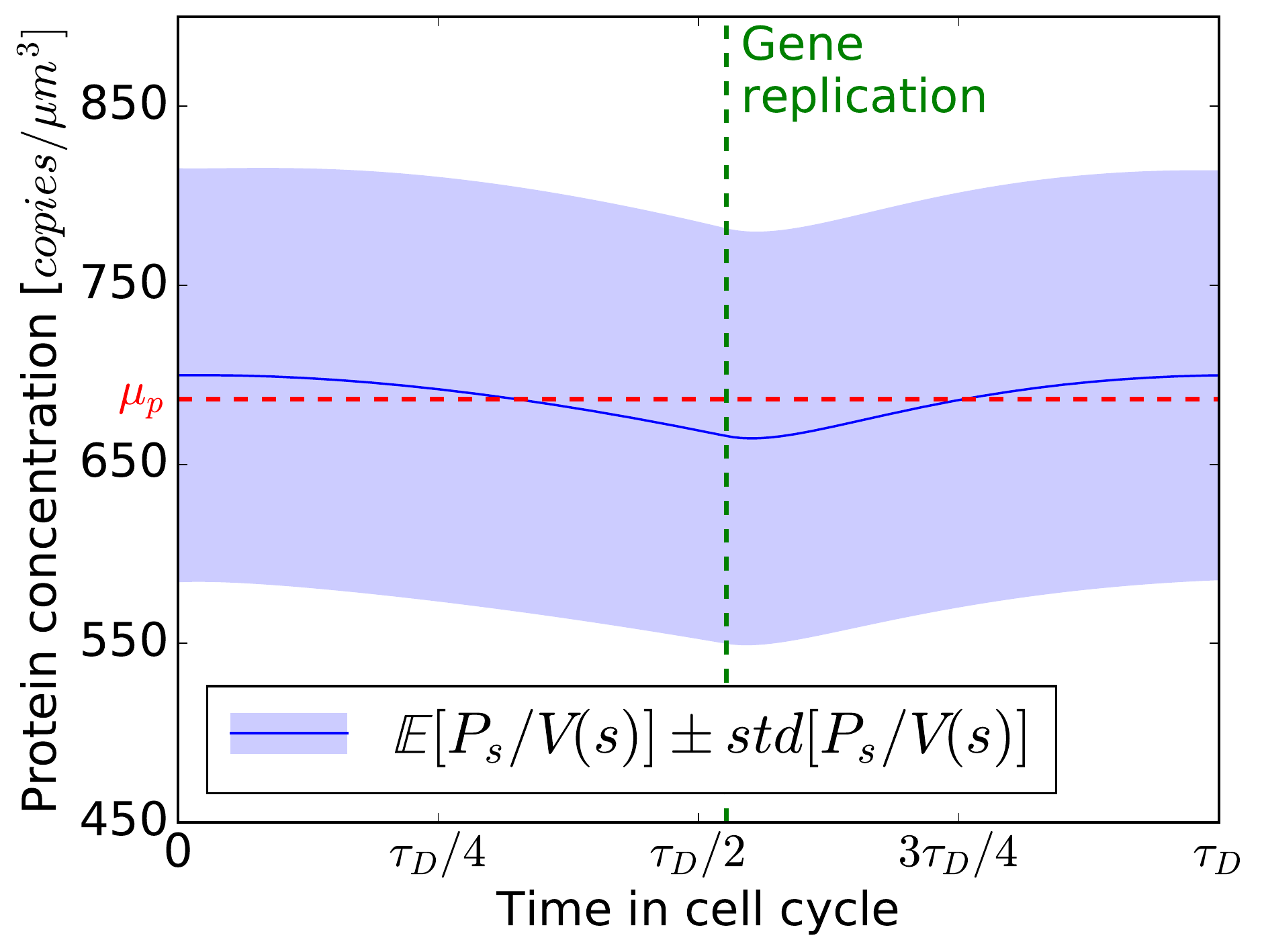}\caption{\label{fig:Profile-example}Profile example of the gene Adk, the central
			line represents the mean protein concentration over the cell cycle,
			and the coloured areas represent the standard-deviation in the models.}
		\par\end{centering}
\end{figure}

An example of the evolution of one protein concentration is shown
in Figure~\ref{fig:Profile-example}. It appears that the mean concentration
at a given time $s$ of the cell cycle $\Esp{P_{s}/V(s)}$ is not constant
during the cell cycle. The curve of $\Esp{P_{s}/V(s)}$ fluctuates around
$2\%$ of the global average protein production (denoted by $\mu_{p}$
in the graph). Experimental measures of average protein expression
during the cell cycle show similar results: for instance, the expression
of genes at different positions on the chromosome have been measured
in \cite{walker_generation_2016}; it shows a similar profile during
the cell cycle and depicts a fluctuation also around $2\%$ of the
global average (see Figure 1.d and Figure S6.b of the article).

Overall, for all the genes considered, the effect of the cell cycle
(the fluctuation of $\Esp{P_{s}/V(s)}$ around $\mu_{p}$) is small
compared to the standard deviation at each moment of the cell cycle
$\sqrt{\V{P_{s}/V(s)}}$ (the coloured areas in Figure~\ref{fig:Profile-example}).
We have determined the range of parameters where, on the contrary,
the fluctuations due to the cell cycle would be higher than the inherent
variability of the system. It appears that such regime is possible
only for highly active gene (high $\lambda_{1}$) and for mRNAs not
very active (low $\lambda_{2}$) and which exist for short periods
of time (high $\sigma_{1}$). Biologically, such regime seems unrealistic
because the cost of mRNA production would be very high.

\begin{figure}
	\centering{}\subfloat[\label{fig:tani}Coefficient of variation in Taniguchi et al. (2010)]{\includegraphics[width=6cm]{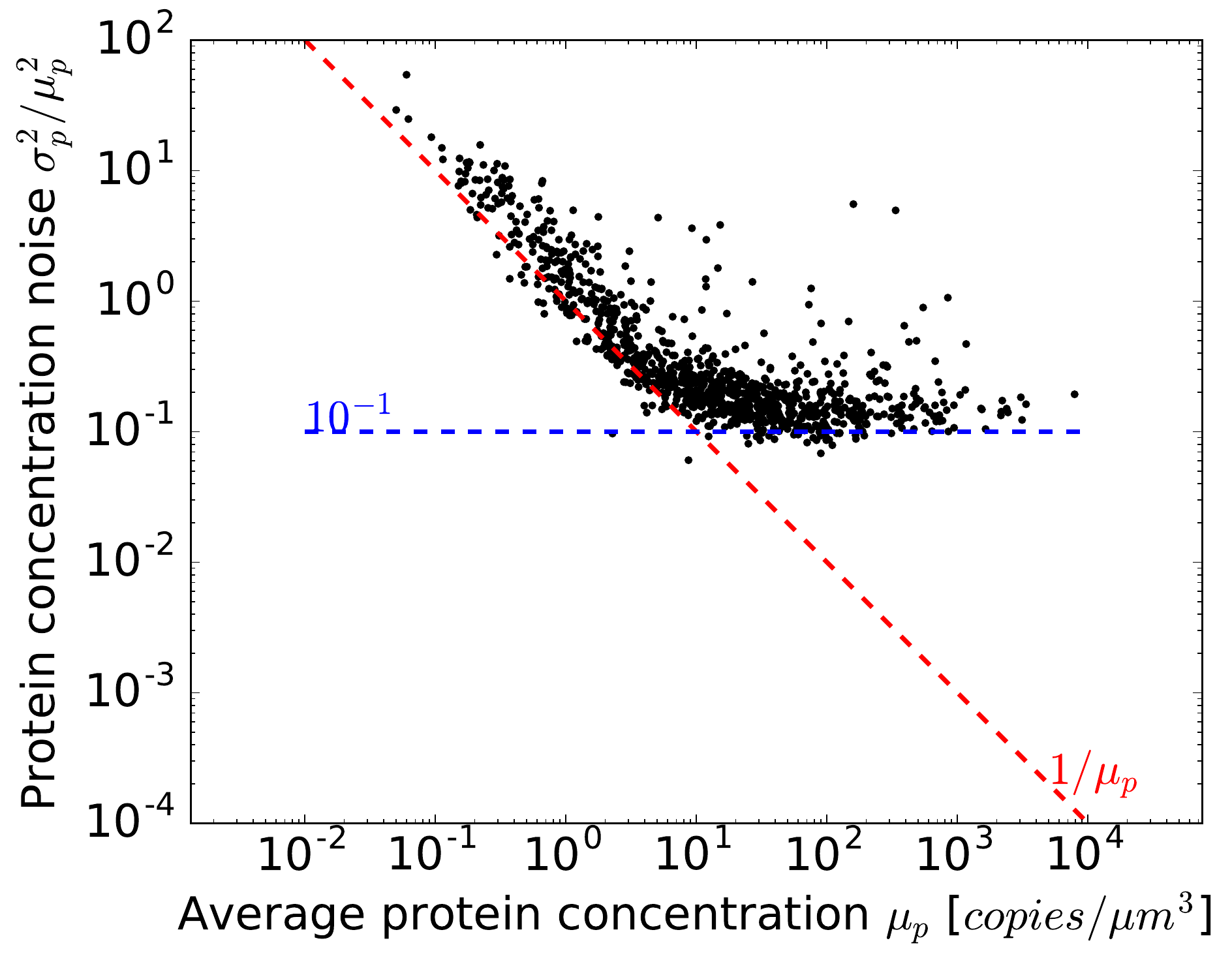}}~~\subfloat[Coefficient of variation in our model]{
		
		\includegraphics[width=6cm]{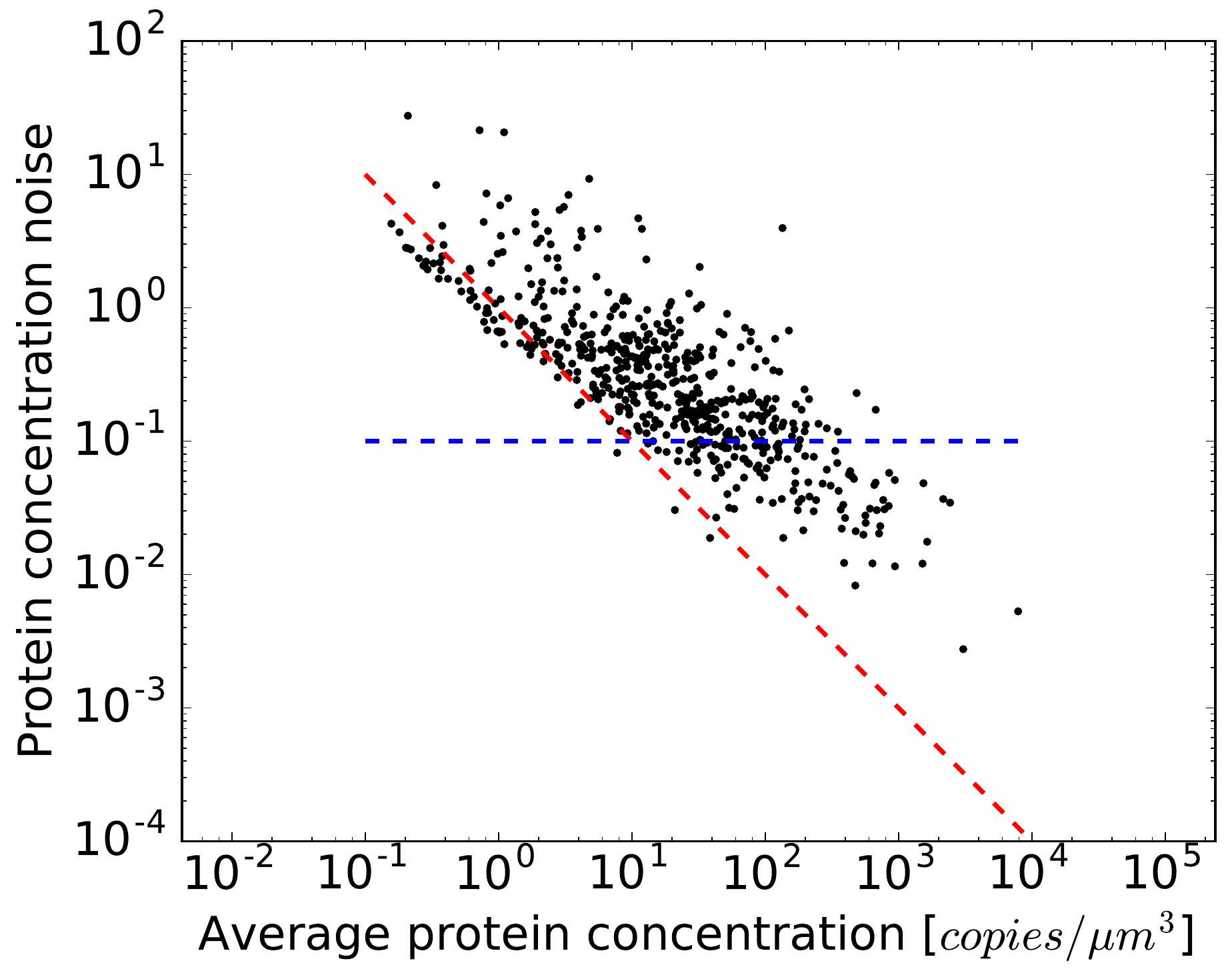}
		
	}\caption{\label{fig:Result_model3_Simulated-noise}Coefficient of variation
	of each proteins. \textbf{(A)} Each dot represents the coefficient
	of variation (CV) of the concentration for a particular protein (the
	CV is defined as $\sigma_{p}^{2}/\mu_{p}^{2}$ with $\sigma_{p}^{2}$
	the variance and $\mu_{p}$ the mean) experimentally measured in \cite{taniguchi_quantifying_2010}.
	It clearly appears two regimes: the first one (for $\mu_{p}<10$)
	shows a CV inversely proportional to the mean; the second regime (for
	$\mu_{p}>10$) shows a CV independent of the average production. \textbf{(B)}
	Same graph obtained with the proteins of our model. Even if the CV
	is roughly inversely proportional to the average production, there
	is no lower bound limit for highly expressed proteins.}
\end{figure}

In Figure~\ref{fig:Result_model3_Simulated-noise}, we compare the experimental
results obtained in the article of Taniguchi~et~al.~(2010) \cite{taniguchi_quantifying_2010}
and our analytical expressions obtained in our model, for the global coefficient of variation (CV) of protein concentration
as a function of the average concentration. In the experimental
measures, it clearly appears two regimes for the CV of protein concentration:
for low expressed proteins (mean protein concentration $<10$), the
CV roughly scales inversely with the average concentration; for genes
with higher protein production (mean protein concentration $>10$),
the CV becomes independent of the average protein production level,
the plateau is around $10^{-1}$. In our model, the global tendency
of the CV approximately inversely scales the average protein concentration,
and there is no lower bound limit, as in the experiments.

We have therefore shown through this model that both DNA replication
and division can not explain the second regime observed in Figure~\ref{fig:tani}.
Yet the authors of the experimental study propose another possible
origin for the noise observed in highly produced proteins; they propose
that it is due to fluctuations in the availability of RNA-polymerases
and ribosomes. These two macro-molecules are shared among the productions
of all the different proteins because they respectively participate
to the transcription of mRNAs and the translation of proteins. Since
they are present in limited quantities, fluctuations in their availability
could indeed have repercussions on the protein noise. We will present
in \cite{dessalles_stochastic_2017} an extension of the model presented
here that takes into account this sharing of ribosomes and RNA-polymerases
in the production of all proteins of the bacteria, and the impact
it has on the protein noise.


\bigskip

\textbf{Acknowledgements}
Bertrand Cloez would like to thank Romain Yvinec for his enthusiastic and stimulating organization of the "Journ\'ees MAS 2016" and Florent Malrieu for many interesting discussions on the subject of his section. The research of Bertrand Cloez was partially supported by the Agence Nationale de la Recherche PIECE 12-JS01-0006-01 and by the Chaire "Mod\'elisation Math\'ematique et Biodiversit\'e " .  The research of Aline Marguet was partially supported by the Chaire "Mod\'elisation
Math\'ematique et Biodiversit\'e".
Renaud Dessalles would like to thank Philippe Robert and Vincent Fromion for their support.

\bibliographystyle{plain}
\bibliography{biblio_pdmp_mas_2016}
\end{document}